\documentclass[english]{article}

\usepackage[T1]{fontenc}
\usepackage{babel}
\usepackage{csquotes}
\usepackage{amsmath}
\usepackage{amsthm}
\usepackage{amssymb}
\usepackage{physics}
\usepackage{graphicx} 
\usepackage{enumitem}
\usepackage{xcolor}
\usepackage{listings}

\usepackage{biblatex}
\usepackage{hyperref}
\usepackage[capitalise]{cleveref}

\title{Explicit affine formulas for distances between tuples in classical discrete structures}
\author{Arthur Molina-Mounier}

\newcommand{\cL}{\mathcal{L}}
\newcommand{\cC}{\mathcal{C}}
\newcommand{\affset}[1]{\mathcal{L}^\text{aff}_{#1}}
\newcommand{\irng}[1]{[\![#1]\!]}

\DeclareMathOperator*{\argmin}{arg\,min}
\DeclareMathOperator*{\tp}{tp}

\newtheorem{thm}{Theorem}[section]
\newtheorem{prop}[thm]{Proposition}
\newtheorem{cor}[thm]{Corollary}
\newtheorem{lem}[thm]{Lemma}
\newtheorem{ques}{Question}[section]

\theoremstyle{definition}
\newtheorem{definition}[thm]{Definition}
\newtheorem*{notation}{Notation}

\theoremstyle{remark}
\newtheorem{fact}{Fait}[section]
\newtheorem{rem}[fact]{Remark}
\newtheorem{ex}[fact]{Example}

\begin{document}
\maketitle

\abstract{Answering a question of Ben Yaacov, Ibarlucía, and Tsankov \cite{EMDI}, we show an explicit way to construct an affine formula for the distance between two $n$-tuples in a $\{0,1\}$-valued $\varnothing$-structure, using $\lceil \log_2 n \rceil$ quantifier alternations.}

\tableofcontents

\newpage

\section{Introduction}

Originally introduced back in 1966 by Chang and Keisler \cite{CCC}, but more recently developed in a modern form and popularized by the work of Ben Yaacov et al. \cite{MTFMS}, continuous logic is a generalization of classical model theory that seeks to provide a formalism more suited to describing objects endowed with a metric structure. In continuous logic, all structures have a complete bounded metric space structure, with regards to which functions defined in the language are continuous, much like predicates and formulas, which are real-valued. Instead of the usual set of Boolean connectives, continuous logic allows the use of any continuous function.

Affine logic is a fragment of continuous logic, introduced in 2014 in works of Bagheri \cite{Bag1,Bag2}, in which connectives are restricted to just affine functions. Although it usually loses some expressivity, its model theory gains a rich convex structure, whose systemic study was initiated in \cite{EMDI}. In that same article, it was established through abstract methods that, for a certain class of continuous metric structures, every continuous formula can be approximated arbitrarily well by affine formulas. This property notably holds for the discrete classical structures with one sort (where, moreover, the approximation is exact). However, the method used gave no explicit description of the aforementioned affine formulas, even for some of the simplest formulas that use non-affine connectives.

\begin{prop}[\cite{EMDI}, 26.5, 26.7]
\label[prop]{prop:thetadef}
	Let $\mathcal{L} = \varnothing$ be the empty language (save for the metric $d$), $\ell \in \mathbb{N}^* \cup \qty{\infty}$, $M_\ell$ the unique countable classical $\mathcal{L}$-structure with $\ell$ elements, and $C_\ell$ its theory. Then, modulo $C_\ell$, every continuous formula is equal to an affine formula.

	In particular, for all $\ell, n \ge 2$, there exists $\theta_n \in \affset{2n}$ such that for all $M \models C_\ell$ and $\bar{a}, \bar{b} \in M^n$,
	\[\theta_n(\bar{a},\bar{b}) = \begin{cases}
		0 & \text{if } \bar{a} = \bar{b} \\
		1 & \text{otherwise}
	\end{cases}\]
\end{prop}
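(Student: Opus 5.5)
The plan is to prove the existence of $\theta_n$ directly, by an explicit recursive construction, rather than by going through the general density result of \cite{EMDI}. Since $M$ is a classical structure, the metric takes only the values $0$ and $1$. So $\theta_n(\bar a,\bar b)$ is just the Boolean ``or'' $\max_i d(a_i,b_i)$ of the bits $d(a_i,b_i)$. For $n=1$ we may simply take $\theta_1(x,y)=d(x,y)$. The only non-affine operation we need to simulate is therefore the binary $\max$ of two $\{0,1\}$-valued quantities, and the strategy is a divide-and-conquer induction. Assume $\theta_m$ has been built. Split a $2m$-tuple into two halves, $\bar x=\bar x'\bar x''$ and $\bar y=\bar y'\bar y''$. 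We then want
\[\theta_{2m}(\bar x,\bar y)=\max\bigl(\theta_m(\bar x',\bar y'),\theta_m(\bar x'',\bar y'')\bigr),\]
realized by an affine formula that uses $\theta_m$ (or variants of it) only a bounded number of times and adds one quantifier alternation. Iterating gives $\lceil\log_2 n\rceil$ alternations, padding $n$ up to a power of $2$ by repeating a coordinate.

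The key step is the ``$\max$ gadget''. Since $\max(p,q)=p+q-\min(p,q)$ for $p,q\in\{0,1\}$, it suffices to express the conjunction $\min(p,q)$ affinely with quantifiers. The idea is to encode bits as equalities of witnesses. A bit $p$ equal to $0$ means the two tuples coincide, so a quantified variable can be asked to ``copy'' one side. I would first try a formula of the shape
\[\inf_{\bar z}\Bigl(\theta_m(\bar x',\bar z)+\theta_m(\bar z,\bar y')+\theta_m(\bar x'',\bar z)+\theta_m(\bar z,\bar y'')\Bigr)-c,\]
or its $\sup$ analogue, and adjust the constant $c$ and the coefficients. The aim is that the optimal witness $\bar z$ can make all terms vanish exactly when at least one of the two pairs agrees. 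The infimum is attained, so case analysis on which of the four tuples $\bar x',\bar y',\bar x'',\bar y''$ coincide reduces each value to a finite computation. The inner $\theta_m$ being existentially or universally quantified is what forces the alternation count to grow by one per level. This is why one should use $\theta_m$ and its ``negation'' $1-\theta_m$ appropriately, so that $\inf$ is applied to a formula beginning with $\sup$ and vice versa.

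The main obstacle I expect is precisely getting this gadget exactly right, not just approximately. The difficulty is that the witness $\bar z$ may be forced to differ from all four tuples at once. Whether this is possible depends on how many elements the structure has. For $\ell=\infty$ or $\ell$ large there is always a fresh element, but for small $\ell$ (notably $\ell=2$) the witnesses available are constrained. I would therefore carry out the case analysis first for $\ell\ge 3$, or $\ell=\infty$, and then treat small $\ell$ separately. There, $C_\ell$ contains the affine sentence $\sup_{x_0\dots x_\ell}\bigl(1-\max_{i<j}d(x_i,x_j)\bigr)=0$, encoded affinely at lower arity. This can be used to correct the constant, or alternatively one can enumerate the finitely many configurations and solve the resulting linear system for the coefficients. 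Finally, the first sentence of the proposition (every continuous formula is affine modulo $C_\ell$) follows from the existence of the $\theta_n$ by quantifier elimination for $C_\ell$. Each quantifier-free continuous formula is a function of the equality type of the tuple, and indicator functions of equality types are affine combinations of the $\theta_k$ applied to subtuples. Quantifiers then commute with the construction because $\sup$ and $\inf$ are allowed in affine logic.
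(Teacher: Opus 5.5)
You have the right skeleton, and it is exactly the paper's \cref{thm:reduction}: $\theta_{2m}$ is $\theta_2$ evaluated in the structure $(M^m,\theta_m)$, with every quantified variable replaced by an $m$-tuple and every $d$ replaced by $\theta_m$. So the only real content is the base gadget, an affine formula computing the maximum of two bits in \emph{every} set with the trivial metric, i.e.\ $\theta_2$ itself. This is precisely what the proposal does not supply, and the candidate you write down fails. Since $\theta_m$ is the trivial metric on $M^m$, your infimum equals $4$ minus the largest number of coinciding tuples among $\bar x',\bar y',\bar x'',\bar y''$. All four terms vanish only when all four tuples coincide, not when one of the two pairs agrees. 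For instance, when $p=q=0$ the infimum is $0$ if $\bar x'=\bar x''$ and $2$ otherwise, so it is not even a function of $(p,q)$, and no constant $c$ can repair this. Reweighting does not help either. Take arbitrary weights $\alpha_1,\dots,\alpha_4$ on the four terms and any $c$, and compare the configurations ``four pairwise distinct tuples'', ``only $\bar x'=\bar y'$'', ``only $\bar x''=\bar y''$'' and ``all four equal''. This yields a contradiction for the $\inf$ form and for the $\sup$ form alike, as soon as there are at least five elements to range over. The paper's $\theta_2$ in \cref{eq:qsum} needs a genuinely different combination: quantifier-free terms, two $\inf$-terms, and a $\sup$-term with the unbalanced weights $2,1,-2,-1$. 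It was found by computing the rank of the value vectors of fifteen explicit formulas on the fifteen types of quadruples, and checked separately for $\ell=2,3$. \Cref{sec:elem} gives another gadget, for $\ell\ge3$ only, via the constructibility of $X=\{(a,b,c,d)\mid a=c\vee b=d\}$.

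Your fallback of enumerating configurations and solving a linear system is the method of \cref{sec:alg}. But it proves nothing until you exhibit a finite family of affine formulas whose value vectors span the vector of $d_2$; without that family, solvability of the system is just a restatement of the claim. For the same reason, the count of one new alternation per level is unsupported until the gadget is known to have a single alternation. The small-$\ell$ paragraph also needs revising:
\begin{itemize}
\item the sentence $\sup_{x_0\dots x_\ell}\bigl(1-\max_{i<j}d(x_i,x_j)\bigr)=0$ is false in every model (take all $x_i$ equal);
\item the intended axiom $\sup_{\bar x}\min_{i<j}d(x_i,x_j)=0$ is not affine;
\item the cardinality problem only ever arises at the base level, because $(M^m,\theta_m)$ has $\ell^m\ge4$ elements once $m\ge2$ (the observation used in \cref{thm:reduction_weak}).
\end{itemize}
Finally, your derivation of the first sentence from the $\theta_k$ is sound in outline and differs from the paper, which simply cites \cite{EMDI}. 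Every formula depends only on the equality type of its arguments, and there are finitely many such types. By inclusion--exclusion, each type indicator is an affine combination of $1$ and of terms $1-\theta_k(\bar u,\bar v)$ expressing that prescribed pairs of coordinates coincide; no commuting of quantifiers is needed. But that step inherits the missing gadget.
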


\begin{ques}[\cite{EMDI}, 26.8]
Is there a ``simple'' explicit expression for the affine formula $\theta_n$?
\end{ques}

We address this question, giving a partial answer by providing an effective construction of these formulas that is uniform in $\ell$. In the remainder of the text, we assume the reader is familiar with usual notions and notations of continuous logic. The main result of the paper is the following:

\begin{thm}
\label[thm]{thm:main}
	For all $n \ge 2$, there exists $\theta_n \in \affset{2n}$ with quantifier complexity (i.e. number of quantifier alternations) $\lceil \log_2 n \rceil$ such that, uniformly for every $\ell \ge 2$, every $\varnothing$-structure $M \models C_\ell$, and every $\bar{a},\bar{b} \in M^n$,
	\[\theta_n(\bar{a},\bar{b}) = d_n(a,b) = \begin{cases}
		0 & \text{if } \bar{a} = \bar{b} \\
		1 & \text{otherwise}
	\end{cases}\]
\end{thm}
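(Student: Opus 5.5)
The plan is to build $\theta_n$ by induction on $\lceil \log_2 n\rceil$, using a single ``maximum gadget'' that merges two blocks of coordinates. Write $p=d(a_1,b_1)$ and $q=d(a_2,b_2)$. Since every value is in $\{0,1\}$, the target $d_2$ is $\max(p,q)=p+q-pq$. Affine connectives alone cannot produce the product term $pq$, so one quantifier block has to do it. The first step is therefore to find an affine formula $\mu(\bar a,\bar b)$ with one quantifier block, of the form $\sup_{\bar x}$ or $\inf_{\bar x}$ of an affine combination of distances among $a_1,a_2,b_1,b_2,\bar x$, that equals $\max(p,q)$ in every model of every $C_\ell$. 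I would first try a single witness $x$ and a combination such as $\sup_x\bigl(\alpha\, d(x,a_1)+\beta\, d(x,b_1)+\gamma\, d(x,a_2)+\delta\, d(x,b_2)\bigr)+\lambda p+\lambda' q+c$. The coefficients are tuned so that the quantifier picks up the correction $-pq$ exactly when both coordinates differ.

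Checking a candidate is a finite computation, because in an $\varnothing$-structure the value of such a formula depends only on the equality type of the parameters and, for the witness, on which parameters $x$ equals or whether $x$ is new. The point needing care is uniformity in $\ell$. For small $\ell$ (especially $\ell=2$, and $\ell=3$ once four parameters are in play) a ``fresh'' witness may not exist. So the case analysis has to show that the extremum is attained by a witness equal to one of the parameters, or that the fresh option never strictly improves the value. This way the value is the same for all $\ell\ge 2$. I would set up the linear constraints over all equality types of $(a_1,b_1,a_2,b_2)$ together with the witness positions, and solve for the coefficients. If one witness is not enough, I would allow a pair $(x_1,x_2)$ quantified in the same block.

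The second step is the induction. The gadget must combine arbitrary $\{0,1\}$-valued distances of tuples, not just single coordinates. So I would state it with $n$-tuples: if $\theta_n$ computes $d_n$ with complexity $k$, then applying the gadget to the blocks $(\bar a,\bar b)$ and $(\bar a',\bar b')$, with $n$-tuples $\bar x$ as witnesses and $\theta_n$ in place of $d$, computes $d_{2n}$. The facts about $d$ used in the base case are values in $\{0,1\}$, the triangle inequality, and the existence of witnesses equal to parameters, and all of these hold for $d_n$ on $M^n$. Hence the same verification transfers verbatim. The only subtlety is that fresh tuples exist only when $\ell^n$ is large enough; the base-case argument already avoids relying on them. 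Placing the new quantifier outside formulas of complexity $k$, and choosing $\sup$ versus $\inf$ to alternate with the outermost quantifier of $\theta_n$, costs one alternation per doubling. For $n$ not a power of two, I would pad by repeating a coordinate, since $d_n(\bar a,\bar b)=d_{2^k}$ of the padded tuples. This gives complexity $\lceil\log_2 n\rceil$, with $\theta_1=d$ of complexity $0$.

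The main obstacle is the first step: actually finding coefficients that give $\max(p,q)$ exactly and uniformly in $\ell$, including $\ell=2$. There is also the check that the outer quantifier of the gadget can always be chosen to alternate rather than merge. If no single-witness gadget exists, I would search systematically as a small linear program over the finitely many equality types.
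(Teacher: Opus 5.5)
You have the paper's reduction, padding and uniformity bookkeeping (\cref{thm:reduction}): a formula for $d_2$ valid in every trivial-metric set with at least two points stays valid in $(M^n,d_n)$, which has $\ell^n\ge 2$ points.

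\textbf{The gap is the base case}, which carries all the content of the theorem. You never produce an affine formula equal to $d_2$ uniformly in $\ell\ge 2$ with controlled complexity, only a search plan with no argument that it succeeds. The abstract existence in \cref{prop:thetadef} gives neither uniformity nor a complexity bound.

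\textbf{Your first template cannot succeed.} For $\ell\ge 4$ one has $\sup_t\sum_i\alpha_i d(t,v_i)=\sum_i\alpha_i+\max\bigl(0,\max_C\sum_{i\in C}(-\alpha_i)\bigr)$, with $C$ ranging over the equality classes of $(a_1,b_1,a_2,b_2)$. In your template this quantified part must depend only on $(p,q)$. Using the symmetries $a_i\leftrightarrow b_i$ and $1\leftrightarrow 2$, let $a_1$ carry the largest $-\alpha_i$. Compare the all-distinct type with the types where only $a_1=a_2$, resp.\ only $a_1=b_2$, holds; all three have $p=q=1$. This forces $-\alpha_{a_2},-\alpha_{b_2}\le 0$. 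The quantified part then does not depend on $q$, so the formula has the form $f(p)+\lambda' q\neq\max(p,q)$. An $\inf$ version is the negative of a $\sup$ one, so it fails too.

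\textbf{How the paper closes the base case.} It uses linearity rather than template fitting. Value vectors on the $15$ types of quadruples live in $\mathbb{R}^{15}$, and real combinations of formulas are formulas. So it suffices to list formulas with at most one quantifier each whose vectors have rank $15$, then solve a linear system. The solution is a sum of single $\sup$'s and $\inf$'s, hence has one alternation, and it is checked on all types for $\ell=2,3,4$:
\[
\begin{aligned}
\theta_2(x,y,z,w) = {}& d(x,y)-d(x,z)-d(x,w)-\inf_a\bigl(1+d(x,a)+d(y,a)+d(z,a)\bigr)\\
&+\inf_a\bigl(1+d(x,a)+d(z,a)+d(w,a)\bigr)+\sup_a\bigl(2d(a,x)+d(a,y)-2d(a,z)-d(a,w)\bigr).
\end{aligned}
\]
Here $(x,y,z,w)=(a_1,a_2,b_1,b_2)$ in your notation.

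\textbf{Your one-block target is still reachable from this.} The second infimum is $\lvert\{x,z,w\}\rvert$. For every $\ell\ge 2$,
\[
\lvert\{x,z,w\}\rvert=d(z,w)+\sup_u\bigl(d(u,z)+d(u,w)-d(u,x)\bigr).
\]
Only the all-equal case needs a witness outside the parameters, namely any $u\ne x$. Substituting this, $\theta_2$ becomes a single $\sup$ over three witnesses, and your induction goes through.

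\textbf{Your alternation count reaches the right bound for the wrong reason.} When the gadget's matrix contains $d$ with both signs, as here, after substitution it contains both $\theta_n$ and $-\theta_n$, whose prenex prefixes are dual. Interleaving them is what costs the extra block. The interleaved prefix can start with either quantifier, so the gadget's own block is absorbed into it. There is therefore nothing to ``choose to alternate''; merging is exactly what you want. Also, a one-block $\theta_2$ has zero alternations, not one. Counting from $\theta_1=d$ you are counting blocks, which gives $k$ blocks for $\theta_{2^k}$. That is within the bound $\lceil\log_2 n\rceil$.
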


\begin{rem}
	The case $\ell = 1$ is trivial ($\theta_n = 0$ for all $n$) and will not be discussed further.
\end{rem}

\subsection{Structure of the paper}
\label{sec:overview}

\Cref{sec:alg} presents a construction of the $\theta_n$ that proves \cref{thm:main}. However, the validity of this construction is only ensured with computer assistance; code that verifies the claims of the section is provided in \cref{app:code}.

A computer-assisted proof such as the one outlined there, while answering the original question, provides little insight into why the construction works, and the proof of its correctness is akin to a tedious case exhaustion. As an alternative, \cref{sec:elem} presents a more conventional and conceptual construction, with its correctness easier to grasp than the computer-assisted one. However, this comes at the cost of a slight weakening of \cref{thm:main}, formulated as \cref{thm:main_weak}.

\section{Notation}

In the following, for all $n \ge 1$ and any set $M$, $d_n$ denotes the distance function on $M^n$ defined by
\[\forall a, b \in M^n, d_n(a,b) = \begin{cases}0 & \text{if } a = b \\ 1 & \text{otherwise}\end{cases}\]
Furthermore, we will call $d_1$ the \emph{trivial metric} (on $M$).

Unless otherwise mentioned, we work in $M$ a $\varnothing$-structure with the trivial metric as only predicate, whose cardinality is $\ell$.

\section{Algorithmic construction}
\label{sec:alg}

\subsection{Theoretical basis}

The following result proves that is it enough to show $d_2$ is equal to an affine formula.

\begin{thm}
\label{thm:reduction}
Assume there is an explicit affine formula $\theta_2$ equal to $d_2$. Then there is an effective construction of affine formulas $\theta_n$ equal to $d_n$, for all $n > 2$.
\end{thm}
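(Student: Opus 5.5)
Assuming an affine formula $\theta_2(x_1,x_2,y_1,y_2)$ equal to $d_2$ on every $M \models C_\ell$, I would build $\theta_n$ by recursively halving the tuples. This is consistent with the $\lceil \log_2 n\rceil$ alternation count of \cref{thm:main}. The key observation is that $\theta_2$ is a formula in four free variables, and nothing stops us from substituting other formulas for the distance predicate $d$ that occurs in it.

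The plan is to replace every atomic occurrence $d(u,v)$ inside $\theta_2$ by an affine formula expressing $d_k$ between $k$-tuples. Concretely, write $\bar a = \bar a' \bar a''$ and $\bar b = \bar b'\bar b''$ with $\bar a',\bar b'$ of length $k = \lceil n/2\rceil$ and $\bar a'', \bar b''$ of length $n-k$. If $n-k<k$, pad $\bar a''$ and $\bar b''$ by repeating their last coordinate, which does not change equality of the tuples. Then $d_n(\bar a,\bar b) = d_2\big((\bar a',\bar a''),(\bar b',\bar b'')\big)$ when $d_2$ is read on the set $M^k$ with the trivial metric $d_k$.

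So $d_n$ is $d_2$ computed in the $\varnothing$-structure $N=(M^k,d_k)$. The variables of $\theta_2$ then range over $N$, and each quantifier $\sup_z$ or $\inf_z$ in $\theta_2$ becomes a block of $k$ quantifiers over $M$. Each atomic $d(u,v)$ becomes $\theta_k(\bar u,\bar v)$, which exists by induction. Since affine connectives compose and quantifier blocks over $M^k$ are just iterated quantifiers over $M$, the result $\theta_n$ is an affine formula in $2n$ variables. Its value is $d_n$ because the semantics of $\theta_2$ in $N$ is computed from the values $d_k$, and these agree with $\theta_k$ by the induction hypothesis. The construction is plainly effective.

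The main obstacle is that $\theta_2$ is only assumed to equal $d_2$ on models of $C_\ell$, whereas $N$ has cardinality $\ell^k$. Cardinality $\ell^k$ is a model of $C_{\ell^k}$, not of $C_\ell$. This is why I need $\theta_2$ to be uniform in $\ell$, as in \cref{thm:main} (for $\ell=\infty$, $N$ is again infinite). I would state the hypothesis as ``$\theta_2 = d_2$ on every $\{0,1\}$-valued $\varnothing$-structure'', i.e.\ on every set with the trivial metric of every size $\ge 2$, and check that the induction only ever uses this hypothesis. The case where $N$ has size $1$ cannot occur since $\ell\ge2$.

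A secondary point is quantifier complexity. Substituting $\theta_k$ for atoms nests the alternations, so I would track that alternations of $\theta_2$ and of $\theta_k$ add appropriately. This yields the $\lceil\log_2 n\rceil$ bound if $\theta_2$ has a single alternation and its atoms occur only in positions where the substituted quantifiers merge into adjacent blocks. Making this precise (for instance, via a prenex form) is the step I would do last.
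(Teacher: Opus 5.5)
Your proposal is correct and is essentially the paper's argument: interpret $\theta_2$ in $(M^k,d_k)$, replace each variable by a $k$-tuple and each atom $d$ by $\theta_k$. Like the paper, this relies on $\theta_2$ being correct uniformly in $\ell$; the paper uses this implicitly when it ``applies the hypothesis'' to $M^{2^k}$, and you state it explicitly. The only difference is bookkeeping: the paper first builds $\theta_{2^k}$ by doubling and then pads both full tuples with copies of $a_1$ up to the next power of two, whereas you split $n=\lceil n/2\rceil+\lfloor n/2\rfloor$ and pad the shorter half at each step, which gives the same recursion depth.
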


\begin{proof}
	We first show the result holds for all $n$ of the form $2^k$, $k \ge 1$.

	The property holds for $2 = 2^1$ by assumption. Let $k \ge 1$, and assume $\theta_{2^k}$ has been constructed. Then, by definition, it is the trivial metric $d'$ on $M^{2^k}$.

	Applying the hypothesis to this structure, we get that $\theta_2^{M^{2^k}}$ is the trivial metric on pairs of elements of $M^{2^k}$. By definition of $d'$, and by using the natural correspondence between $\qty(M^{2^k})^2$ and $M^{2^{k+1}}$, we can recursively replace every variable in the expression of $\theta_2^{M^{2^k}}$ by a $2^k$-tuple of variables, and all instances of $d'$ with an equivalent instance of $\theta_{2^k}$.

	This yields an expression of the trivial metric on $2^{k+1}$-tuples in terms of $d$, which we can define to be $\theta_{2^{k+1}}$.

	Now let $n > 2$ for which $\theta_n$ hasn't yet been constructed, and let $k \ge 1$ be the smallest such that $n \le 2^k$. By the above, $\theta_{2^k}$ has an effective construction. Then define
	\[\theta_n(a_1,\ldots,a_n,b_1,\ldots,b_n) = \theta_{2^k}(a_1,\ldots,a_n,\underbrace{a_1,\ldots,a_1}_{2^k-n},b_1,\ldots,b_n,\underbrace{a_1,\ldots,a_1}_{2^k-n})\]
\end{proof}

In a structure whose only predicate is the trivial metric, knowing the equality or non-equality of every pair of elements of an $n$-tuple is the same as knowing the type of the $n$-tuple (in the sense of model theory). Since there are finitely many pairs of coordinates, there are also finitely many types of $n$-tuples. This means that any assertion about a formula in such a structure can be checked by case exhaustion on the finite number of types of tuples.

The present construction relies on this very fact. As there are finitely many types of tuples of each length, the vector space of continuous functions (or even more generally, real-valued functions) on such types is finite-dimensional (this argument was also used in the original proof of \cref{prop:thetadef}), with dimension equal to the number of types of tuples (for quadruplets with $\ell \ge 4$, this number is $15$). Since every (affine) formula can be interpreted as one such function, the space of affine formulas up to pointwise equality has (at most) the same dimension. Therefore, if we find a generating set of this space, an expression for $\theta_2$ can be deduced by taking an appropriate linear combination of the basis elements.

\subsection{Algorithmic derivation}

Using the fact that formulas are completely characterized by their image of every type of tuple, we wrote a simple Python script to enumerate all types of quadruplets, and for any given formula in $\affset{4}$, compute the images of each type as a vector, which characterizes the formula up to equivalence. Then, the \texttt{numpy} package was used to compute the rank of these vectors. Proceeding by trial and error, we constructed the following set of formulas with rank 15, which is a basis when $\ell \ge 4$ since 15 is then the number of types of quadruplets.

Code is provided in \cref{app:code}.

\pagebreak
All of the following formulas have at most 4 free variables among $x,y,z,w$. Equivalent expressions are given for some formulas.
\begin{itemize}
\item $\varphi_1 = 1$
\item $\varphi_2 = d(x,y)$
\item $\varphi_3 = d(x,z)$
\item $\varphi_4 = d(x,w)$
\item $\varphi_5 = d(y,z)$
\item $\varphi_6 = d(y,w)$
\item $\varphi_7 = d(z,w)$
\item $\varphi_8 = \inf_a (1+d(x,a)+d(y,a)+d(z,a)) = \qty{x,y,z}$
\item $\varphi_9 = \inf_a (1+d(x,a)+d(y,a)+d(w,a)) = \qty{x,y,w}$
\item $\varphi_{10} = \inf_a (1+d(x,a)+d(z,a)+d(w,a)) = \qty{x,z,w}$
\item $\varphi_{11} = \inf_a (1+d(y,a)+d(z,a)+d(w,a)) = \qty{y,z,w}$
\item $\begin{aligned}\varphi_{12} &= \sup_a (4 - d(a,x) - d(a,y) - d(a,z) - d(a,w)) \\ & = \text{largest number of components of $(x,y,z,w)$ that are equal to one another}\end{aligned}$
\item $\varphi_{13} = \sup_a (d(a,x)+d(a,y)+d(a,z)-d(a,w)-2) = \begin{cases}1 & \text{if } w \in \qty{x,y,z} \\ 0 & \text{otherwise}\end{cases}$
\item $\varphi_{14} = \sup_a (d(a,x)+d(a,y)-d(a,z)-d(a,w)) = \begin{cases}0 & \text{if } \qty{a,b} = \qty{c,d} \\ 2 & \text{if } c = d \notin \qty{a,b} \\ 1 & \text{otherwise}\end{cases}$
\item $\varphi_{15} = \sup_a (2(d(a,x)-d(a,z)) + d(a,y)-d(a,w))$
\end{itemize}

\begin{ques}
Is there a ``nice'' interpretation of the 15th formula? Alternatively, can we find another basis of formulas which all have ``nice'' interpretations?
\end{ques}

\subsection{Proof of \cref{thm:main}}

Using the above basis, \texttt{numpy} was further used to obtain $\theta_2$ as a linear combination of elements of the generating set:
\begin{align}
\theta_2(x,y,z,w) &= \varphi_2 - \varphi_3 - \varphi_4 - \varphi_8 + \varphi_{10} + \varphi_{15} \nonumber \\
&\begin{aligned}
&= d(x,y) - d(x,z) - d(x,w) \\
& - \inf_a (1+d(x,a)+d(y,a)+d(z,a)) \\
& + \inf_a (1+d(x,a)+d(z,a)+d(w,a)) \\
& + \sup_a (2(d(a,x)-d(a,z)) + d(a,y)-d(a,w))
\end{aligned} \label{eq:qsum}
\end{align}
Bringing all quantifiers together into one expression, we obtain
\[\begin{split}
\theta_2(x,y,z,w) = &\sup_a \sup_b \inf_c \big[ d(x,y) - d(x,z) - d(x,w) \\
	& + 2d(a,x) - 2d(a,z) + d(a,y) - d(a,w) \\
	& -d(b,x) - d(b,y) - d(b,z) \\
	& + d(c,x) + d(c,z) + d(c,w) \big]
\end{split}\]

Although the generating set search presumed $\ell \ge 4$ and thus the full set of types of quadruplets, the formula was checked to hold for $\ell=2,3$ as well.

\begin{prop}
\label[prop]{prop:alg_quantalt}
The formulas $\theta_n$ obtained from the above definition of $\theta_2$ and the procedure from the proof of \cref{thm:reduction} can be written with $\lceil \log_2 n \rceil$ quantifier alternations.
\end{prop}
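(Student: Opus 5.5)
Let $q(n)$ denote the quantifier complexity of $\theta_n$. I will first show $q(2^k) \le k$ by induction on $k \ge 1$, and then use the padding step of \cref{thm:reduction}.

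\emph{Base case.} The prenex form of $\theta_2$ is $\sup_a \sup_b \inf_c [\ldots]$. Its matrix is quantifier-free: it is an affine combination of atomic formulas $d(\cdot,\cdot)$. The prefix has one block of $\sup$'s followed by one $\inf$. Counting alternations as the number of quantifier blocks, as \cref{thm:main} implicitly does for $n=2$, gives $q(2) = 1$. The main obstacle is to pin down this counting convention, so that the base case gives exactly $1$ and the induction adds exactly one per level.

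\emph{Inductive step.} Suppose $\theta_{2^k}$ is in prenex form with at most $k$ blocks. We obtain $\theta_{2^{k+1}}$ from the prenex form of $\theta_2$ as follows:
\begin{itemize}
\item each quantified variable $a,b,c$ is replaced by a $2^k$-tuple of variables;
\item each atomic $d(u,v)$ is replaced by $\theta_{2^k}(\bar u,\bar v)$.
\end{itemize}
A block $\sup_a$ becomes a block of $2^k$ sups, which is still one block. The matrix is now an affine combination
\[\sum_i \lambda_i\, \theta_{2^k}(\bar u_i, \bar v_i).\]
I then prenex this matrix using the affine-logic rules $\lambda\sup_x\varphi = \sup_x \lambda\varphi$ for $\lambda\ge 0$, the fact that a negative coefficient turns $\sup$ into $\inf$, and $\sup_x\varphi+\sup_y\psi=\sup_x\sup_y(\varphi+\psi)$ with fresh variables.

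The key difficulty is the signs. Summands with negative coefficients have their quantifier pattern dualized. To keep the total at $k$ blocks, I intend to pull the outer $\sup$ prefix of the positive terms and the $\sup$ prefix of the dualized negative terms, which now starts with $\inf$, into a common interleaved prefix.

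Here I would use the following observation. A sum of formulas whose prefixes have the shapes $\sup\inf\sup\cdots$ and $\inf\sup\cdots$ can be written with a prefix of length $\max+1$ blocks, by interleaving level by level. So the matrix has at most $k+1$ blocks, and merging its leading block with the outer $\sup\sup\inf$ prefix of $\theta_2$ has to absorb the one extra block.

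If this gives $k+2$ instead, I would instead treat $\theta_{2^k}$ as a formula in both forms. One form is $\Sigma$-like with $k$ blocks starting with $\sup$. The other, if needed, comes from an equivalent expression of the same $d_{2^k}$ obtained from the identity $\theta_{2^k} = 1-(1-\theta_{2^k})$. I would also check directly on small cases, $k=1$ giving $\theta_4$, that the count is $2$.

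\emph{Padding.} For general $n$, take $k=\lceil\log_2 n\rceil$. The padding in \cref{thm:reduction} only substitutes free variables, so it does not change the quantifier structure, and $q(n)\le q(2^k)\le k$.
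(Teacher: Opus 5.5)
Your inductive step does not close, and the fallback you propose cannot repair it. There is also a counting slip: $\sup_a\sup_b\inf_c$ has two blocks, so ``alternations $=$ number of blocks'' gives $2$ for $\theta_2$. The convention that yields $1$ is blocks minus one, and your hypothesis should read $k+1$ blocks. Now redo the step with $\theta_{2^k}$ written with $m$ blocks, $\sup$ first. In the matrix of the outer prenex $\theta_2$, atoms with positive coefficient become $\sup$-first $m$-block formulas. Atoms with negative coefficient ($-d(x,z)$, $-d(b,x)$, \dots) become $\inf$-first $m$-block formulas. Interleaving these gives $m+1$ blocks. Putting $\sup_a\sup_b\inf_c$ in front merges only one boundary, so $\theta_{2^{k+1}}$ gets $m+2$ blocks. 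Your route therefore adds two blocks per doubling and only gives $2k-1$ alternations for $\theta_{2^k}$. That is the bound of \cref{thm:main_weak}, not $k$. The identity $\theta_{2^k}=1-(1-\theta_{2^k})$ cannot produce an $\inf$-first form: pushing the two minus signs through dualizes the prefix and then dualizes it back.

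The missing idea is that $\theta_2$ is more than ``a formula with prefix $\sup\sup\inf$''. By \cref{eq:qsum} it is a quantifier-free part plus three summands, each a single quantifier over a quantifier-free formula. So both $\theta_2$ and $-\theta_2$ can be prenexed with two blocks beginning with either quantifier.

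The paper uses this by substituting in the opposite direction, which gives the same formula since iterated substitution is associative. The prenex $\theta_{2^k}$ is the outer formula: its variables are replaced by pairs, and each atom $d$ in its matrix is replaced by $\theta_2$ written as in \cref{eq:qsum}. The new matrix is a sum of quantifier-free terms and of $\sup$'s and $\inf$'s of quantifier-free formulas. That is two blocks, and the leading one can be chosen to match the innermost block of $\theta_{2^k}$, so each doubling adds exactly one block.

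Alternatively, your ``both forms'' intuition works in your own orientation if you build it into the induction hypothesis. Assume $\theta_{2^k}$ can be written with $k+1$ blocks starting with $\sup$ and also with $k+1$ blocks starting with $\inf$. For $k=1$ these are $\sup_a\sup_b\inf_c$ and $\inf_c\sup_a\sup_b$; the reordering is valid because each atom of the matrix mentions at most one of $a,b,c$. In the inductive step, use the $\inf$-first form for positive occurrences of $\theta_{2^k}$ and the negated $\sup$-first form for negative ones. Then the whole matrix is $\inf$-first with $k+1$ blocks and is absorbed by $\inf_c$. Symmetrically, take the matrix $\sup$-first and reorder the outer prefix as $\inf_c\sup_a\sup_b$ to get the $\inf$-first form. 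Both forms then have $k+2$ blocks, as required. Your padding step is fine as it stands.
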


\begin{proof}
Given the proof of \cref{thm:reduction}, it suffices to show that the construction of $\theta_{2^{k+1}}$ from $\theta_{2^k}$, for all $k \ge 1$, only incurs 1 additional quantifier alternation. Let $k \ge 1$, and assume $\theta_{2^k}$ has been written in prenex form, i.e. quantifiers followed by a quantifier-free formula.

Applying the construction, every variable is replaced with a pair of variables, and every instance of $d$ with $\theta_2$. Rewriting $\theta_2$ using \cref{eq:qsum}, and pushing minus signs inside quantifiers, the inner quantifier-free formula becomes a sum of terms of three different forms:
\begin{itemize}
\item Quantifier-free terms
\item Terms of the form $\sup \psi$ with $\psi$ quantifier-free.
\item Terms of the form $\inf \psi$ with $\psi$ quantifier-free.
\end{itemize}

Since these quantified expressions are summed, we can interleave all of the quantifiers in any order while preserving the validity of the formula. By clustering quantifiers of the same type together, and making the outermost ones match the innermost quantifier of the surrounding prenex form, we add only 1 quantifier alternation to the amount we started with.

Since $\theta_2$ has 1 quantifier alternation, this shows that for all $k \ge 1$, $\theta_{2^k}$ has $k$ quantifier alternations. Then, for any $n$ not of the form $2^k$, $\theta_n$ has the same amount of quantifiers as $\theta_{2^{\lceil \log_2 n \rceil}}$, which proves the result, and also \cref{thm:main}.
\end{proof}

\section{Alternate elementary construction}
\label{sec:elem}

\subsection{Overview}

As announced earlier, this section contains an alternate construction of the $\theta_n$ of \cref{thm:main}, using conceptual arguments and without relying on computer assistance. However, this approach can only prove a slightly weaker version of the theorem:

\begin{thm}
\label[thm]{thm:main_weak}
	For all $n \ge 2$, there exists $\theta_n \in \affset{2n}$ with quantifier complexity $2\lceil \log_2 n \rceil - 1$ such that, uniformly for every $\boxed{\ell \ge 3}$, every $\varnothing$-structure $M \models C_\ell$, and every $\bar{a},\bar{b} \in M^n$,
	\[\theta_n(\bar{a},\bar{b}) = d_n(a,b) = \begin{cases}
		0 & \text{if } \bar{a} = \bar{b} \\
		1 & \text{otherwise}
	\end{cases}\]
\end{thm}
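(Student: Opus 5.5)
The plan is to keep the reduction of \cref{thm:reduction} and replace only the computer-found $\theta_2$ by a hand-built affine formula of the form $\sup_a \inf_b \psi$, with $\psi$ quantifier-free. Then $\theta_2$ has one alternation, which gives the case $n=2$ since $2\lceil\log_2 2\rceil-1=1$. When this $\theta_2$ is substituted into the doubling step, every instance of $d$ becomes a block $\sup\inf(\cdots)$. After pushing signs inside, each such term is either $\sup\inf$ or $\inf\sup$. Interleaving them after the innermost quantifier of the existing prenex form costs at most two new alternations, not the single one obtained in \cref{prop:alg_quantalt}. This would give $\theta_{2^k}$ with $2k-1$ alternations. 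Padding as in \cref{thm:reduction} then gives the bound $2\lceil\log_2 n\rceil-1$ for all $n$.

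To build $\theta_2$ conceptually, I write
\[d_2((x,y),(z,w)) = \max(d(x,z),d(y,w)) = d(x,z)+d(y,w)-\min(d(x,z),d(y,w)).\]
So it suffices to find an affine formula $\mu(x,y,z,w)$ that equals $1$ exactly when $x\neq z$ and $y\neq w$, and $0$ otherwise. The basic gadgets are the one-quantifier primitives already visible in the list of $\varphi_i$. For instance, $\sup_a (d(a,z)-d(a,x)) = d(x,z)$, witnessed by $a=x$. Likewise, $\inf_a (d(a,x)+d(a,y))$ equals $0$ or $1$ according to whether $x=y$.

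The guiding idea is to use a witness $a$ that ``tests'' $x$ against $z$ and, simultaneously, a witness $b$ that tests $y$ against $w$. I then combine the tests so that the $\sup$ can only reach value $1$ when both tests succeed. The inner $\inf_b$ will be used to cancel the contribution of the $a$-test when $y=w$. Concretely, I would try
\[\mu = \sup_a \inf_b \big[\alpha(a;x,z) + \beta(b;y,w) + \gamma(a,b)\big],\]
where $\alpha,\beta$ are affine combinations of $d(a,\cdot)$ and $d(b,\cdot)$, and $\gamma$ is a multiple of $d(a,b)$ coupling the two witnesses. I would tune the integer coefficients by hand on the finitely many types.

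The hypothesis $\ell\ge 3$ enters here: the witness $a$ must sometimes be chosen outside $\{x,z\}$, or $b$ outside $\{a,y\}$, and this needs a third element. This is why the elementary construction loses the case $\ell=2$.

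Correctness will be checked by the type argument of \cref{sec:alg}. The value of a formula depends only on the equality type of $(x,y,z,w)$, and, for the quantified parts, only on which types of $(x,y,z,w,a)$ and $(x,y,z,w,a,b)$ are realizable. For $\ell\ge 3$ the relevant extremal witnesses already exist in any $3$-element subset together with the parameters. The verification therefore reduces to a short case analysis, grouped by whether $x=z$ and whether $y=w$, uniform in $\ell\ge3$.

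The main obstacle is the design of $\mu$: choosing coefficients so that the $\sup\inf$ is exactly the minimum, with no parasitic value from configurations such as $x=w$ or $y=z$. I would first try to get $\mu$ right on the four ``generic'' cases where the only coincidences are among $x=z$ and $y=w$. Afterwards I would add correction terms of the form $d(x,w)$, $d(y,z)$ and the triple formulas $\varphi_8,\dots,\varphi_{11}$, all of which have at most one quantifier, to fix the degenerate types without increasing the alternation count.
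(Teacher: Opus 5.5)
Your outer architecture is sound and matches the paper's: reduce to $n=2$ by doubling, note that a genuinely nested $\sup\inf$ form of $\theta_2$ costs two alternations per doubling (this is \cref{prop:elem_quantalt}), and pad. But the content of the theorem is the base case: an affine $\theta_2$ with one quantifier alternation that equals $d_2$ uniformly for $\ell\ge 3$. You never produce it. Your $\mu$ is an ansatz whose coefficients are still ``to be tuned'', and you yourself list its design as the main obstacle. The abstract existence result \cref{prop:thetadef} cannot fill this step, since it gives no bound on quantifier alternations.

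Moreover, the ansatz cannot work on its own for any choice of coefficients. The inner part $g(a)=\inf_b[\beta(b;y,w)+\gamma d(a,b)]$ depends only on $\tp(a,y,w)$, and $\alpha$ depends only on $\tp(a,x,z)$. Write $A_\sigma$ and $G_\rho$ for their values on types $\sigma$ and $\rho$. Let $a_0$ (resp. $a^*$) be the largest $A_\sigma$ over types with $x=z$ (resp. $x\ne z$), and define $g_0$, $g^*$ likewise for $y=w$ and $y\ne w$.
\begin{itemize}
\item For $\ell\ge 3$, every pair $(\sigma,\rho)$ in which $\sigma$ has $x=z$ or $\rho$ has $y=w$ is realized by some $a$ over a tuple with $\mu=0$. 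Hence $a_0+g^*\le 0$ and $a^*+g_0\le 0$.
\item The type $\mathbf{0000}$ gives $a_0+g_0\ge 0$.
\item Adding the two inequalities yields $a^*+g^*\le 0$. So $\mu\le 0$ on every tuple with $x\ne z$ and $y\ne w$, a contradiction.
\end{itemize}
This defeats exactly the mechanism you intended. Cancelling the $a$-test when $y=w$ forces $g_0$ low, then $\mathbf{0000}$ forces $a_0$ high, and that kills the reward.

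So everything rests on the correction terms, and you give no argument that the ones you list suffice. If you allow all fifteen $\varphi_i$ as corrections, the coupling becomes superfluous (they span all functions of $4$-types when $\ell\ge4$). You are then back to the linear-algebra search of \cref{sec:alg}, which the elementary route is meant to avoid.

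The missing idea in the paper is constructibility as argmin of affine formulas:
\begin{itemize}
\item It shows that $X=\{a=c\vee b=d\}$ is constructible as $A_1\cap A_2\cap A_3$. This is where $\ell\ge3$ genuinely enters, via \cref{eq:card4} and \cref{lem:A3}.
\item It writes $\theta_2$ as a $\sup$ of $d(a,x)+d(b,y)$ over $(\{a,c\}\times\{b,d\})\cap X'$.
\item It computes $\inf f_{X'}=6+d(a,c)+d(b,d)$ explicitly, so that unfolding \cref{eq:constr_sup} creates no extra alternation.
\end{itemize}

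Your uniformity claim is also too quick. With two bound variables over four parameters, the realizable $6$-types change up to $\ell=6$. The values must therefore be checked for each small $\ell$, or you must argue that optimal witnesses never need more than one new element.
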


Points where the $\ell \ge 3$ assumption is required will be highlighted by boxing them like above. The reduction to the $n=2$ case still holds:

\begin{thm}
\label{thm:reduction_weak}
Assume there is an explicit affine formula $\theta_2$ equal to $d_2$ when $\ell \ge 3$. Then for all $n > 2$, there is an effective construction of affine formulas $\theta_n$ equal to $d_n$ when $\ell \ge 3$.
\end{thm}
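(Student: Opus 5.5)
The plan is to rerun the proof of \cref{thm:reduction} with one extra check: every structure to which we apply the hypothesis on $\theta_2$ must have at least $3$ elements. The construction has two parts. First we build $\theta_{2^k}$ by induction on $k$, starting from the given $\theta_2$ and doubling at each step. Then we pad tuples to handle arbitrary $n$.

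\textbf{Doubling step.} Suppose $\theta_{2^k}$ has been constructed and equals $d_{2^k}$ on every $M\models C_\ell$ with $\ell\ge 3$. Fix such an $M$ and consider the structure $N=M^{2^k}$, equipped with the trivial metric $d'=d_{2^k}=\theta_{2^k}^M$. This is a $\varnothing$-structure with the trivial metric, so $N\models C_{\ell'}$ with $\ell'=\ell^{2^k}$.

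The key point is that $\boxed{\ell\ge 3}$ implies $\ell'\ge 3$, so the hypothesis applies to $N$. Hence $\theta_2^N$ is the trivial metric on $N^2$.

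We now translate $\theta_2^N$ back into a formula over $M$:
\begin{itemize}
\item each free variable of $\theta_2$ is replaced by a $2^k$-tuple of variables;
\item each quantifier $\sup_a$ or $\inf_a$ over $N$ becomes a block of $2^k$ quantifiers of the same type over $M$, which is legitimate since $N=M^{2^k}$;
\item each occurrence of $d'$ is replaced by the corresponding instance of $\theta_{2^k}$.
\end{itemize}
The result is affine, because substituting affine formulas into an affine formula keeps it affine. Using $(M^{2^k})^2\cong M^{2^{k+1}}$, it equals $d_{2^{k+1}}$ on $M$, and we define it to be $\theta_{2^{k+1}}$.

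\textbf{Padding step.} For general $n$, choose the least $k$ with $n\le 2^k$. Define $\theta_n$ from $\theta_{2^k}$ by padding both tuples with $2^k-n$ copies of $a_1$, exactly as in the proof of \cref{thm:reduction}. The padded coordinates agree on both sides, so $\bar a=\bar b$ if and only if the padded tuples are equal. Therefore $\theta_n=d_n$ whenever $\ell\ge 3$.

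\textbf{Where care is needed.} The only real issue is ensuring that each application of the $\theta_2$ hypothesis occurs in a structure of size at least $3$. This holds because we only ever apply it to $M$ itself or to powers $M^{2^k}$, whose cardinality is at least that of $M$. Uniformity in $\ell$ holds because the construction is syntactic and does not depend on $\ell$. For the same reason the argument also covers $\ell=\infty$, where $M^{2^k}$ is again infinite and therefore a model of $C_\infty$.
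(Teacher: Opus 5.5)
Your proposal is correct and follows the paper's proof. Both rerun the doubling and padding argument of \cref{thm:reduction}, where the only new check is that the power structure $M^{2^k}$ has at least $3$ elements, so that the hypothesis on $\theta_2$ applies to it. The paper gets this from $|M^{2^k}| \ge 2^{2} = 4$ using only $\ell \ge 2$ and $k \ge 1$, whereas you use $\ell \ge 3 \Rightarrow \ell^{2^k} \ge 3$; either suffices.
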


\begin{proof}
	Analogous to the proof of \cref{thm:reduction_weak}. The only difference is ensuring that $\qty|M^{2^k}| \ge 3$, so that $\theta_2^{M^{2^k}}$ is indeed equal to the trivial metric on pairs. This is always true since $\ell > 1$, $k \ge 1$, so $\qty|M^{2^k}| \ge 2^{2^1} = 4 > 3$.
\end{proof}

\begin{rem}
\label[rem]{rem:x}
The main result on which this construction hinges is that the set $X  = \qty{(a,b,c,d) \in M^4 \mid a = c\ \vee\ b = d}$ satisfies a certain effective form of definability, from which we will be able to deduce an expression for $\theta_2$ in \cref{sec:proof}.
\end{rem}

\subsection{Constructibility in classical continuous theories}

\begin{definition}
Given a language $\mathcal{L}$, a metric $\mathcal{L}$-structure is \emph{classical} if the range of every predicate of the language (including distance) is contained in $\{0,1\}$. A continuous $\mathcal{L}$-theory is \emph{classical} if all of its models are classical. We shall write $\mathcal{C}_\mathcal{L}$ the class of classical $\mathcal{L}$-structures.
\end{definition}

\begin{definition}[Reminder : affinely definable set]
	Fix $n > 0$ and $\cL$ a language. Let $\cC$ a class of metric $\cL$-structures. A (uniformly) \emph{affinely definable set over $\cC$} is given by a family of nonempty sets $(D_M)_{M \in \cC}$ such that for every $M \in\cC$, $D_M \subseteq M^n$, and by $\psi$ an affine definable $\cL$-predicate over $\cC$, i.e. a uniform limit of affine $\cL$-formulas (uniform over $M \in\cC$), whose interpretation $\psi^M$ in every $M \in\cC$ is equal to $d(x,D_M) = \inf_{a \in D_M} d(x,a)$.
	
	Furthermore, $(D_M)_{M \in \cC}$ is an affinely definable set over $\cC$ if and only if, for every affine $\cL$-formula in one free variable $x$ and additional variables $y$, $\varphi(x,y)$, there exists an affine definable predicate equal to $\inf_{x \in D_M} \varphi^M(x,y)$.
\end{definition}

\begin{definition}[Constructible set]
	Fix $n > 0$. We will say a family of sets $D = (D_M)_{M \text{ an $\cL$-structure}}$ is (affinely) \emph{constructible} if there exists an affine formula $f \in \affset{n}$ such that
	\[\forall M \text{ an $\cL$-structure}, D_M = \argmin_{x \in M^n} f^M(x) = \qty{a \in M^n \mid f^M(a) = \inf_{x\in M^n} f^M(x)}\]
	
	In this case, we will write $D = \argmin_x f(x)$. The integer $n$ is the \emph{dimension} of $D$.
\end{definition}

\begin{ex}
	The definable set $\Omega$ such that $\Omega_M = M$ for every $\cL$-structure $M$ is constructible, as $\Omega = \argmin 0$.
\end{ex}

\begin{prop}
\label[prop]{prop:defconstr}
	Every constructible set over $\cC_\cL$ is definable on $\cC_\cL$.
\end{prop}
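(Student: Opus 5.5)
Let $D = \argmin_x f(x)$ with $f \in \affset{n}$, and fix $M \in \cC_\cL$. The plan is to show that the distance predicate $d(x, D_M)$ is itself an affine formula, uniformly in $M$, by proving the identity
\[d(x,D_M) = \inf_{y} \big( d_n(x,y) + K(f(y) - \inf_z f(z)) \big)\]
for a suitable constant $K$. The right-hand side should then be rewritten as an affine formula. Since $M$ is classical, $d(x, D_M)$ only takes values in $\{0,1\}$ (the sup-metric on $M^n$ is $\{0,1\}$-valued), which is what makes such a penalty approach exact.

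The key step is a uniform gap property. We need a $\delta>0$, depending only on $f$, such that for every $M \in \cC_\cL$ and $y \notin D_M$ we have $f^M(y) - \inf f^M \ge \delta$. Because $M$ is classical, every atomic formula takes values in $\{0,1\}$. Working in prenex form, or by induction on the construction of $f$, the value of a quantifier-free affine formula lies in a finite set determined by its coefficients. I will argue by induction that any affine formula takes values in a set $c + \frac1q\mathbb{Z}$ intersected with a bounded interval. Here $q$ is a common denominator when the coefficients are rational; for real coefficients I would use the finitely generated additive group $G_f$ spanned by the coefficients instead, after discretizing. The delicate point is that $\sup$ and $\inf$ of values in a discrete bounded set are attained and stay in that set, so the value set of every subformula is a fixed finite set independent of $M$. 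Taking $\delta$ as the minimal positive gap among these finitely many values, with $\delta = 1$ if there is none, gives the claim. The same finiteness shows that $\inf f^M$ is attained, so $D_M$ is nonempty, as the definition requires.

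With $K = 1/\delta$, the identity follows quickly. If $x \in D_M$, take $y = x$ to get the value $0$. Otherwise every $y \in D_M$ contributes $d_n(x,y)=1$, and every $y\notin D_M$ contributes at least $K\delta = 1$. Both cases therefore give $\min(1,\cdot)=1$, matching $d(x,D_M)$.

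The main obstacle is that $d_n(x,y)$ is not yet known to be affine; indeed, that is the goal of the section. I would sidestep it with the bound $d_n(x,y) \le \sum_i d(x_i,y_i)$, which keeps the value $0$ at $y=x$. Every competing $y \ne x$ gets a value at least $1$, using either $y \in D_M$ (sum at least $1$) or the penalty. The infimum is still exactly $d(x,D_M)$. The term $-K\inf_z f(z)$ becomes $+K\sup_z(-f(z))$, so the whole expression is an affine formula uniformly over $\cC_\cL$, which proves definability.
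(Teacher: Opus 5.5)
\textbf{There is a genuine gap, in your last paragraph.} Replacing $d_n(x,y)$ by $\sum_i d(x_i,y_i)$ keeps the lower bound: every $y\neq x$ still scores at least $1$. It destroys the upper bound, however. For $x\notin D_M$, a point $y\in D_M$ now scores the number of coordinates where it differs from $x$, not $1$. Points outside $D_M$ are not capped at $1$ either. So the infimum can exceed $1$.

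Concretely, take $D=\{(u,v)\in M^m\times M^m\mid u=v\}=\argmin_{u,v}\sum_{i\le m}d(u_i,v_i)$, for which $\delta=K=1$. At $x=(a,b)$, the triangle inequality gives
\[\inf_{u,v}\sum_{i\le m}\big(d(a_i,u_i)+d(u_i,v_i)+d(v_i,b_i)\big)=\sum_{i\le m}d(a_i,b_i).\]
This is the Hamming distance, whereas in the sup metric $d((a,b),D_M)=d_m(a,b)\in\{0,1\}$.

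This should have been a warning sign. If the argument worked, it would give an explicit affine formula for $d_m$ in one line, which is exactly what the rest of the paper works hard to obtain. The obstacle you flagged is real and cannot be bypassed by an upper bound. (Under the $\ell^1$ convention on tuples, the target would be the Hamming distance to $D_M$, and $K=1/\delta$ would then be too small; you would need $K\delta\ge n$.)

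\textbf{The missing idea} is not to express the distance at all. Instead, use the equivalent characterization recalled in the definition: $D$ is affinely definable iff $\inf_{x\in D_M}\varphi(x,y)$ is affine for every affine $\varphi$. The paper proves this with your penalty trick, but calibrated to the oscillation of $\varphi$ rather than to $1$. If $r\le\varphi\le s$ uniformly and $\varepsilon$ is the uniform gap of $f$, then
\[\inf_{x\in D_M}\varphi(x,y)=\inf_x\Big(\big(\tfrac{s-r}{\varepsilon}+1\big)\big(f(x)-\inf_z f(z)\big)+\varphi(x,y)\Big).\]
This holds because points of the nonempty set $D_M$ score $\varphi(x,y)\le s$, while points outside score at least $s+\varepsilon$.

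Your gap argument is essentially the paper's, but drop the detour through $c+\frac1q\mathbb{Z}$. For real coefficients the generated group can be dense, so that route does not give discreteness. All you need is the structural induction showing each subformula has a finite value set independent of $M$, with $\sup$ and $\inf$ attained inside it.
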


\begin{proof}
	By structural induction, in a classical metric structure, every formula takes on a finite set of values which does not depend on the choice of a particular structure, since the values taken on by predicates invariably lie in $\{0,1\}$. In particular, the finiteness of the range implies that all $\argmin$s within a classical structure are nonempty, i.e. all minimums are reached.

	Furthermore, from the discreteness of the range, there is $\varepsilon > 0$ such that, for all classical $\cL$-structures $M$ and $a \in M^n$,
	\[f(a) > \inf_{a' \in M^n} \implies f(a) \ge \varepsilon + \inf_{a' \in M^n} f(a')\]

	Let $\varphi \in \affset{n+m}$. By induction and using the bounds contained in $\cL$ (in particular, uniform over $\cC_\cL$), there are $r,s \in \mathbb{R}$ such that, uniformly, $r \le \varphi \le s$.

	Fix $b \in M^m$. Let $A = \argmin_x f(x)$. We claim that
	\begin{equation}
	\label{eq:constr_quant}
	\inf_{a \in A} \varphi(a,b) = \inf_a \qty(\qty(\frac{s-r}{\varepsilon}+1)\qty(f(a) - \inf_z f(z)) + \varphi(a,b))
	\end{equation}

	Let $M \in \cC_\cL$. As remarked above, $A_M \ne 0$, and for all $a \in A_M$, the expression within the $\inf$ on the right-hand side is equal to $0 + \varphi(a,b)$, which is less than or equal to $s$ by definition.

	By the definition of $\varepsilon$, if $a \notin A_M$,
	\[f(a) - \inf_z f(z) \ge \varepsilon\] and thus
    \begin{align*}
        \qty(\frac{s-r}{\varepsilon} + 1)\qty(f(a) - \inf_z f(z)) + \varphi(a,b) &\ge \qty(\frac{s-r}{\varepsilon} + 1)\varepsilon + \varphi(a,b) \\
        &\ge s-r + \varepsilon + r \\
        &= s + \varepsilon \\
        &> s
    \end{align*}

    The infimum can therefore only be reached if $a \in A$, hence the result.

    Analogously, we have
    \begin{equation}
    \label{eq:constr_sup}
    \sup_{a \in A} \varphi(a,b) = \sup_a \qty(-\qty(\frac{s-r}{\varepsilon}+1)\qty(f(a) - \inf_z f(z)) + \varphi(a,b))
    \end{equation}
\end{proof}

\begin{rem}
	It should be noted that constructibility could have alternatively been defined using definable predicates instead of mere formulas, and most of the general theorems would remain true, \textit{mutatis mutandis}. The converse of the above proposition would then be true, as the distance to a definable set is, by definition, a definable predicate.
\end{rem}

\begin{rem}
	For the sake of intuition, constructibility may be understood as an effective form of definability: if $A$ is a constructible set, provided we know (or can effectively construct) an explicit formula $f$ such that $A = \argmin f$, every quantification over $A$ can be explicitly rewritten in terms of $f$.
	
	In the rest of the text, we say a construction is ``effective'' if one would be able to derive an explicit expression for the result of the construction from the outlined steps.
\end{rem}

\begin{notation}
	The statement of \cref{thm:main} can be reformulated by introducing the theory $C_{\ge 3}$ of structures with at least 3 elements, and quantifying over all $M$ models of $C_{\ge3}$. Unless otherwise mentioned, we will now take $M$ to be a nondescript model of one of the theories $C_\ell$, $\ell \ge 3$ (equivalently: a model of $C_{\ge 3}$).
\end{notation}

\subsection{Types of tuples}
\label{sec:types}

\begin{definition}[Type labeling]
	Let $\mathcal{L} = \varnothing$. Let $a = (a_1, \ldots, a_n)$ an $n$-tuple. Its type $\tp(a)$ can be labeled by the following procedure:
    \begin{enumerate}
        \item Label all elements equal to $a_1$ by $\mathbf{0}$.
        \item While there remain unlabeled elements, take the first one (according to the ordering of the coordinates), and label all elements equal to it using the smallest natural number not already used as a label.
    \end{enumerate}

	This process outputs a finite sequence of natural numbers $(t_k)_{1 \le k \le n}$ such that $t_1 = 0$ and for all $k \in \irng{1,n-1}$, \[t_{k+1} \le \max_{1 \le i \le k} t_i + 1\] with the same type (relative to equality of natural numbers) as the starting tuple.

	Furthermore, every such sequence represents exactly one type, namely that of tuples such that elements at two coordinates are equal if and only if the labels at those coordinates are equal. This shows there is a bijection between the set of such sequences and types of $n$-tuples.\footnote{The number of such sequences (and therefore of types in a sufficiently large space) as a function of $n$, as well as alternate descriptions, is given in \cite{A000110}}
\end{definition}

\begin{ex}
	In $\mathbb{N}$, the type of $(5,3,5,6)$ would be represented by $\mathbf{0102}$, and that of $(5,6,4,4)$ by $\mathbf{0122}$.

	There are two types of pairs: $\mathbf{00}$, $\mathbf{01}$.

	There are 5 types of triples: $\mathbf{000}$, $\mathbf{001}$, $\mathbf{010}$, $\mathbf{011}$, $\mathbf{012}$.

	There are 15 types of quadruplets : $\mathbf{0000}$, $\mathbf{0001}$, $\mathbf{0010}$, $\mathbf{0011}$, $\mathbf{0012}$, $\mathbf{0100}$, $\mathbf{0101}$, $\mathbf{0102}$, $\mathbf{0110}$, $\mathbf{0111}$, $\mathbf{0112}$, $\mathbf{0120}$, $\mathbf{0121}$, $\mathbf{0122}$, $\mathbf{0123}$.
\end{ex}

\begin{rem}
	In a nonempty language $\cL \ne \varnothing$, this labeling describes the types inside the structure's empty-language reduct.
\end{rem}

Since the value of a formula only depends on the type of its parameter, for the purposes of quantification we may use the above labeling to enumerate the types of tuples inside a set, and we may conflate a given set with the set of types of its elements. For instance, if $A = \qty{(a,a) \mid a \in M}$, we may write $A =  \qty{\mathbf{00}}$.

\subsection{Properties of constructible sets}
\label{sec:csets}

\begin{rem}
	If, instead of considering the whole of $\cC_\cL$, we restricted ourselves to one given classical structure $M$, then $\varphi$ and $f$ (and by extension $A$) could be taken with parameters in $M$, without changing the result or the proof. In other words, if $b \in M$ is fixed, then $A = \argmin_x f(x,b) \subseteq M$ is definable in $M$.
\end{rem}

\begin{notation}
	If $A$ is a constructible set, $f_A$ denotes a formula such that $A = \argmin_x f_A(x)$, unless otherwise mentioned. Furthermore, if $A$ was obtained through an effective construction, $f_A$ will be taken to be the formula obtained by applying said construction.
\end{notation}

\begin{prop}[Stability properties of constructibility]
\label[prop]{prop:constr_stab}
	Fix $\mathcal{L}$ a language. We will work in a classical $\mathcal{L}$-structure $M$.
    \begin{enumerate}
        \item\label{item:prod} If $A, A'$ are constructible, then $A \times A'$ is constructible.
        \item\label{item:proj} If $A$ is constructible, every projection from $A$ onto a subset of its coordinates is constructible.
        \item\label{item:inter} If $A$ and $A'$ are constructible, not disjoint, and of equal dimension, then $A \cap A' = \argmin(f_A+f_{A'})$ is constructible.
        \item\label{item:restarg} If $B \subseteq M^n$ is constructible and $f \in \affset{n}$, then $A = \argmin_{x\in B} f(x) \subseteq M^n$ is constructible.
		\item\label{item:compr} If $A \subseteq M^n$ is constructible, $1 \le i_1 < \cdots < i_k \le n$ and $B \subseteq M^k$ constructible, then \[A' = \qty{(a_1, \ldots, a_n) \in A \mid (a_{i_1}, \ldots, a_{i_k}) \in B}\] is constructible, on the condition it is nonempty.
		\item\label{item:param} If $A \subseteq M^{n+m}$ is constructible, and if $a \in M^n$ satisfies $\exists a' \in M^m, (a,a') \in A$, then \[A' = \qty{a' \in M^m \mid (a,a') \in A}\] is constructible in $\mathcal{L}(a)$, and furthermore $f_{A'}(x') = f_A(a,x')$.
    \end{enumerate}
	
	\cref{item:compr} is a restricted form of comprehension that will be freely used in the following.
\end{prop}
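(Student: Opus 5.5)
We treat the six items in turn, each time exhibiting an explicit affine formula. The recurring tool is the discreteness observation from the proof of \cref{prop:defconstr}. In $\cC_\cL$ every affine formula takes values in a fixed finite set, so there is $\varepsilon>0$ such that $f(a)>\min f$ implies $f(a)\ge \min f+\varepsilon$. Every formula is also uniformly bounded, $r\le\varphi\le s$. Since $\min f$ is attained, we may write $\inf_z f_A(z)$ as a closed affine formula and use it freely inside the constructions. We also use \cref{eq:constr_quant} to turn quantifiers over constructible sets into ordinary quantifiers.

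For \cref{item:prod} we take $f(x,x')=f_A(x)+f_{A'}(x')$. The variables are separated, so the minimum of the sum is $\min f_A+\min f_{A'}$, and it is attained exactly on $A\times A'$. For \cref{item:inter} we use the given $f_A+f_{A'}$. Its infimum is at least $\min f_A+\min f_{A'}$, and equality holds precisely on $A\cap A'$, which is nonempty by hypothesis. For \cref{item:proj}, with coordinates split as $(y,z)$, we set $g(y)=\inf_z f_A(y,z)$. Then $\min_y g=\min f_A$, and $g(y)$ attains this value iff some $z$ satisfies $(y,z)\in A$, which is exactly membership in the projection.

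The main step is \cref{item:restarg}. Let $f$ be bounded by $r\le f\le s$, let $\varepsilon$ be the discreteness constant for $f_B$, and put $g=(\frac{s-r}{\varepsilon}+1)(f_B-\inf_z f_B(z))+f$. The penalty computation from \cref{prop:defconstr} shows that $g\le s$ on $B$ and $g>s$ off $B$. Hence $\min g=\min_B f$, and every minimizer of $g$ lies in $B$. On $B$ the penalty term vanishes, so $g=f$ there, and therefore $\argmin g=\argmin_{x\in B}f=A$.

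The remaining items follow from these. For \cref{item:compr} we note that $A'=A\cap(M^{n}\text{ restricted to }B\text{ on the coordinates }i_1,\dots,i_k)$. The second set is constructible as $\argmin_x f_B(x_{i_1},\dots,x_{i_k})$: the unused coordinates are free, so by the product argument its minimizers are exactly those tuples whose chosen coordinates lie in $B$. Since $A'$ is nonempty by hypothesis, \cref{item:inter} applies. For \cref{item:param} we fix $a$ with some $(a,a')\in A$. Then $\min_{x'}f_A(a,x')=\min f_A$, because the fibre meets $A$, so the minimizers of $f_A(a,\cdot)$ are exactly the $a'$ with $(a,a')\in A$. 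Here we use the remark that parameters are allowed when working in a single structure.

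The obstacle I expect is uniformity. The constants $\varepsilon,r,s$ must be chosen independently of $M$, and we must make sure that an $\inf$ over a possibly infinite structure is really attained. Both points are settled by the finite-range argument in the proof of \cref{prop:defconstr}, which holds uniformly over $\cC_\cL$. In the parametric case we keep the single structure $M$ fixed, so uniformity is not needed there.
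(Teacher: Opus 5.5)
Your proposal is correct and follows the paper's proof item by item. You use the sum $f_A+f_{A'}$ for products and intersections, and $\inf$ over the discarded coordinates for projections. For restricted argmins you use the same penalty formula $(\frac{s-r}{\varepsilon}+1)(f_B-\inf_z f_B(z))+f$, and for parameters you use $f_A(a,\cdot)$. The only deviation is in item 5: you obtain $A'$ as the intersection of $A$ with the cylinder $\argmin_x f_B(x_{i_1},\dots,x_{i_k})$ via item 3, whereas the paper minimises $f_A$ over that cylinder via item 4. Both arguments use $A'\neq\varnothing$ in the same way, and yours avoids the penalty constants.
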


\begin{proof}
	\begin{enumerate}
		\item If $A = \argmin_x f_A(x)$ and $A' = \argmin_{x'} f_{A'}(x')$, \[A \times A' = \argmin_{x,x'}\qty(f_A(x) + f_{A'}(x'))\]
		
		\item Assume that $A$ has dimension $n$, i.e. $A = \argmin_{a_1, \ldots, a_n} f_A(a_1, \ldots, a_n)$, and $1 \le i_1 < \cdots < i_k \le n$ for some $k \in \irng{1,n}$. Let $j_1 < \cdots <j_{n-k}$ be an increasing enumeration of $\irng{1,n} \setminus \qty{i_1, \ldots, i_k}$. Then
		\[\qty{(a_{i_1}, \ldots, a_{i_k}), a \in A} = \argmin_{a_{i_1}, \ldots, a_{i_k}} \inf_{a_{j_1}, \ldots, a_{j_{n-k}}} f_A(a_1, \ldots, a_n)\]
		\item Similar to \cref{item:prod}.
		
		\item Similar to \cref{prop:defconstr}. Let $r, s \in \mathbb{R}$ satisfying $\forall x \in M^n, r \le f_A(x) \le s$, and let $\varepsilon > 0$ such that $\forall x \in M^n, f_B(x) > 0 \implies f_B(x) \ge \inf_y f_B(y) + \varepsilon$. Then define
		\begin{align*}
			g(x) &= \qty(\frac{s-r}{\varepsilon} + 1)(f_B(x) - \inf_y f_B(y)) + f_A(x) \\
			A' &= \argmin_{x} g(x) \\
		\end{align*}
		
		If $x \in B$, i.e. $f_B(x) = \inf_y f_B(y)$, then
		\[g(x) = f_A(x) \le s\]
		Now assume $x \notin B$, i.e. $f_B(x) \ge \varepsilon$. Then
		\begin{align*}
			g(x) &\ge s-r + \varepsilon + f_A(x) \\
			&\ge s-r + \varepsilon + r \\
			&= s + \varepsilon \\
			&> s
		\end{align*}
		
		Therefore $\forall x \in B, \forall y \notin B, g(x) < g(y)$, and as a consequence
		\[A' = \argmin_{x} g(x) = \argmin_{x\in B} g(x) = \argmin_{x\in B} f_A(x)\]
		
		\item Let $\tilde{f}_B \in \affset{n}$ be a formula such that $\tilde{f}_B(a_1, \ldots, a_n) = f_B(a_{i_1}, \ldots, a_{i_k})$. Then
		\[A' = \argmin_{(a_1, \ldots, a_n) \in \argmin_x \tilde{f}_B(x)} f_A(a)\] which is constructible by the previous item. Note that even if $A'$ as defined in the statement is empty, the right-hand side of the above equation will nevertheless be well-defined, but will not describe the same set.
		
		\item We have \[A' = \argmin_{x'} f_A(a,x')\] 
		Indeed, $\inf_{x'} f_A(a,x') \ge \inf_{x,x'} f_A(x,x')$, and by assumption on $a$ we have equality, so every $a' \in \argmin_{x'} f_A(a,x')$ satisfies \[f_A(a,a') = \inf_{x'} f_A(a,x') = \inf_{x,x'} f_A(x,x')\] i.e. $(a,a') \in A$.
	\end{enumerate}
\end{proof}

\begin{prop}
\label[prop]{prop:pairdef}
	Let $\mathcal{L}$ be a language and $a,b \in M$. Then the set $\qty{a,b}$ is constructible in $\mathcal{L}(a,b)$.
\end{prop}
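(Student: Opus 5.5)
We exhibit an explicit affine formula with parameters $a,b$ whose argmin is exactly $\qty{a,b}$. The natural candidate is
\[f(x) = d(x,a) + d(x,b),\]
viewed as a formula in one free variable $x$ over $\mathcal{L}(a,b)$. Since $M$ is classical, the metric takes values in $\qty{0,1}$, and the argument is a two-case computation.

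First suppose $a \ne b$. For $x \in \qty{a,b}$, exactly one of $d(x,a), d(x,b)$ vanishes and the other equals $1$, so $f(x) = 1$. For $x \notin \qty{a,b}$, both distances equal $1$, so $f(x) = 2$. Moreover $x$ can never make both terms vanish, since that would force $a = x = b$. Hence $\inf_x f(x) = 1$, the infimum is attained, and $\argmin_x f(x) = \qty{a,b}$.

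Now suppose $a = b$. Then $f(x) = 2d(x,a)$, which equals $0$ exactly at $x = a$ and $2$ elsewhere. So $\argmin_x f(x) = \qty{a} = \qty{a,b}$ again.

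Thus the single formula $f$ works uniformly in both cases, and $\qty{a,b} = \argmin_x f(x)$ is constructible in $\mathcal{L}(a,b)$. No hypothesis such as $\ell \ge 3$ is needed, and by \cref{prop:defconstr} (in its parametrized form from the remark at the start of \cref{sec:csets}) quantification over $\qty{a,b}$ can then be rewritten effectively.

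The only point requiring care is checking that the same formula handles the degenerate case $a=b$ without splitting the construction. The computation above shows it does, because the definition of constructibility only requires the argmin to coincide with the set, not a fixed value of the minimum.
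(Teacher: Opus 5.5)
Your proof is correct and takes the same approach as the paper, which proves the statement in one line by observing that $\{a,b\} = \argmin_x \bigl(d(a,x)+d(b,x)\bigr)$. Your two-case check ($a \neq b$ giving minimum value $1$, $a = b$ giving minimum value $0$) just spells out why that identity holds.
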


\begin{proof}\ \\
	Immediate by observing that \[\qty{a,b} = \argmin_x \qty(d(a,x) + d(b,x))\]
\end{proof}

\begin{prop}[Useful ``connectives'']
	The following sets are constructible:
	\begin{align}
		\Delta &= \qty{(a,b) \mid a = b} &= \qty{\mathbf{00}} \\
		\bar\Delta &= \qty{(a,b) \mid a \ne b} &= \qty{\mathbf{01}} \\
		U &= \qty{(a,b,c) \mid c = a\ \vee\ c = b} = \qty{(a,b,c) \mid c \in \qty{a,b}} &= \qty{\mathbf{000}, \mathbf{010}, \mathbf{011}} \label{eq_inpair}
	\end{align}
	
	Furthermore, for every $p,q,r,s \in M$, we have
	\begin{align}
		\inf_{x} \qty(d(p,x) + d(q,x) + d(r,x) + 1) &= \qty|\qty{p,q,r}| \label{eq:card3} \\
		\qty|\qty{p,q,r}| + \sup_x \qty(d(p,x) + d(q,x) + d(r,x) - d(s,x) - 2) &= \qty|\qty{p,q,r,s}| \label{eq:card4}
	\end{align}
\end{prop}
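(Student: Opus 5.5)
To prove the proposition, I would exhibit an explicit affine formula for each of $\Delta$, $\bar\Delta$ and $U$ whose argmin is the required set, and then check the two cardinality identities by a short case analysis on the types of $(p,q,r)$ and $(p,q,r,s)$ from \cref{sec:types}. Throughout, $M \models C_{\ge 3}$, and in each case I will say where this is used.

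\textbf{The three sets.} For $\Delta$, I would take $f_\Delta(a,b) = d(a,b)$. Its minimum value $0$ is attained exactly on $\mathbf{00}$. For $\bar\Delta$, I would take $f_{\bar\Delta}(a,b) = -d(a,b)$. Its minimum value $-1$ is attained exactly on $\mathbf{01}$, which requires $\ell \ge 2$. For $U$, I would take $f_U(a,b,c) = d(a,c) + d(b,c) - d(a,b)$. On the types $\mathbf{000}, \mathbf{010}, \mathbf{011}$ it takes the value $0$, while on $\mathbf{001}$ it gives $2$ and on $\mathbf{012}$ it gives $1$. Hence the argmin is exactly $U$. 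The type $\mathbf{012}$ is realised only when $\boxed{\ell \ge 3}$, but that realisability is not needed for the argmin computation. An alternative route for $U$ is to apply \cref{prop:pairdef} with parameters $a,b$ and then \cref{prop:constr_stab} (\cref{item:param}) in reverse, but the direct formula is simpler.

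\textbf{Identity (\ref{eq:card3}).} The infimum over $x$ is attained either at $x$ equal to a most frequent element among $p,q,r$, or at a fresh element. If $x$ equals an element occurring $m$ times, the bracket equals $1 + 3 - m$. If $x$ is fresh, it equals $4$. So the infimum is $4 - (\text{max multiplicity})$. For triples, the types $\mathbf{000}$, $\mathbf{001}$ (and its permutations) and $\mathbf{012}$ have maximal multiplicity $3$, $2$ and $1$, and cardinality $1$, $2$ and $3$ respectively. In each case $4 - \text{max multiplicity}$ equals the cardinality, which proves the identity.

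\textbf{Identity (\ref{eq:card4}).} It suffices to show that the supremum equals $1$ if $s \notin \{p,q,r\}$ and $0$ otherwise; the identity then follows by adding $|\{p,q,r\}|$. This matches the interpretation of $\varphi_{13}$, up to the complement convention, which I would verify here.

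If $s \notin \{p,q,r\}$, choosing $x = s$ gives $3 - 0 - 2 = 1$. This is maximal, since the first three terms sum to at most $3$ and $-d(s,x) \le 0$.

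If $s \in \{p,q,r\}$, I would split on $x$. If $x = s$, then one of $d(p,x), d(q,x), d(r,x)$ vanishes, so the bracket is at most $2 - 0 - 2 = 0$. If $x \ne s$, the bracket is at most $3 - 1 - 2 = 0$. The value $0$ is attained, for example at $x = s$ when $s$ occurs exactly once among $p,q,r$.

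\textbf{Main obstacle.} Attainment of $0$ needs care when $s$ occurs more than once among $p,q,r$, because then the choice $x = s$ gives a negative value. In that case one would instead pick $x \notin \{p,q,r,s\}$, which gives $3 - 1 - 2 = 0$. This is where $\boxed{\ell \ge 3}$ enters: with $\{p,q,r,s\}$ of size at most $2$ there is room for such an $x$. If $\{p,q,r\}$ has three distinct elements, $s$ occurs exactly once and the choice $x = s$ already works.

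The remaining difficulty is purely bookkeeping: one must go through the 15 quadruplet types and make sure no case of small $\ell$ breaks the chosen witnesses.
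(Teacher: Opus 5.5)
Your proof is correct and follows the paper's: you use the same three defining formulas $d(a,b)$, $-d(a,b)$ and $d(a,c)+d(b,c)-d(a,b)$, and the same case analysis for the two identities, with $\ell \ge 3$ needed only to find an $x$ outside $\{p,q,r\}$ when $s$ is repeated (the paper phrases this as taking $s=r$ without loss of generality, so that the bracket becomes $d(p,x)+d(q,x)-2$, maximised at any $x \notin \{p,q\}$). Your aside about obtaining $U$ from \cref{prop:pairdef} by applying \cref{prop:constr_stab} (\labelcref{item:param}) ``in reverse'' is not justified, since that item only passes from a constructible set to its fibres, but you do not rely on it.
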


\begin{proof}\ \\
	For the first part of the proposition :
	\begin{itemize}
		\item $\Delta = \argmin_{a,b} d(a,b)$
		\item $\bar\Delta = \argmin_{a,b} -d(a,b)$
		\item $U = \argmin_{a,b,c} \qty(d(a,c) + d(b,c) - d(a,b))$
	\end{itemize}
	
	\noindent
	We proceed by cases for \cref{eq:card3} :
	\begin{itemize}
		\item If $p = q = r$, then $x = p = q = r$ gives the expected value of $1$.
		\item If $p = q \ne r$ (since the formula is symmetric in its free variables, we may consider any permutation thereof), then taking $x = p = q$ does result in the value of $2$.
		\item Otherwise, if $p, q, r$ are all distinct from one another, then the minimum is reached when $x$ is equal to any one of them, in which case the resulting value is $3$ as desired.
	\end{itemize}
	
	For \cref{eq:card4}, notice that the second term of the sum is equal to $1$ if $s \notin \qty{p,q,r}$, and $0$ otherwise: if $s\in\qty{p,q,r}$, without loss of generality $s = r$, then $d(r,x) - d(s,x) = 0$ for all $x$, and the maximum $0$ is attained for all $x \notin \qty{p,q}$ (which must exist since $\boxed{\ell \ge 3}$). Otherwise, $x = s$ yields the maximum $1$.
\end{proof}

\begin{cor}[One-dimensional union]
\label[cor]{cor:union1}
	If $A, B$ are constructible and have dimension 1, then $A \cup B$ is constructible.
\end{cor}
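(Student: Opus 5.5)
This corollary reads most naturally as a consequence of the set $U$ from \cref{eq_inpair}. That set is already a ``union'' in the sense that its third coordinate ranges over the pair $\qty{a,b}$. The plan is therefore to realize $A\cup B$ as the projection of a set built from $A\times B\times M$ and cut down by $U$, and then to invoke the stability properties of \cref{prop:constr_stab}.

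The steps, in order, are as follows.
\begin{enumerate}
\item Since $A$ and $B$ are constructible of dimension 1, \cref{item:prod} of \cref{prop:constr_stab} shows that $A\times B\times M$ is constructible, using $M=\Omega=\argmin 0$. Its defining formula is $f_A(x)+f_B(y)$.
\item Apply the restricted comprehension \cref{item:compr}, with the constructible set $U$ on the coordinates $(1,2,3)$. This yields
\[W=\qty{(a,b,c)\in A\times B\times M \mid c\in\qty{a,b}}.\]
$W$ is nonempty because $A$ and $B$ are nonempty (all $\argmin$s in classical structures are nonempty, as noted in the proof of \cref{prop:defconstr}): pick $a\in A$, $b\in B$ and put $c=a$.
\item Project $W$ onto its third coordinate by \cref{item:proj}. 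The result is exactly $\qty{c \mid \exists a\in A,\exists b\in B,\ c\in\qty{a,b}}=A\cup B$.
\end{enumerate}

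Explicitly, unwinding the proofs of \cref{item:restarg} and \cref{item:proj}, one gets
\[A\cup B=\argmin_z \inf_{x,y}\Big(K\big(f_A(x)+f_B(y)-\inf f_A-\inf f_B\big)+d(x,z)+d(y,z)-d(x,y)\Big)\]
for a suitable constant $K$. The constant $K$ can be bounded using the finiteness of the value set of $f_A+f_B$ in classical structures. Two points about this formula should be checked. First, $\inf f_A$ and $\inf f_B$ are closed affine formulas, so they are legitimate constants inside the expression. Second, the uniform gap $\varepsilon$ exists, as established in \cref{prop:defconstr}.

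The only real obstacle is the direction of \cref{item:compr}. That item produces the argmin of $f_{A\times B\times M}$ restricted to $\argmin \tilde f_U$, and this set agrees with the intended comprehension set only when the latter is nonempty. I will handle this carefully: take the set $B'$ in \cref{item:restarg} to be $U$ itself, with $f=f_A(x)+f_B(y)$. The argmin of $f$ over $U$ is then $\qty{(a,b,c)\in U\mid a\in A, b\in B}$. This holds because the minimum of $f_A(x)+f_B(y)$ over $U$ equals the global minimum $\inf f_A+\inf f_B$: the triple $(a,b,a)$ lies in $U$ and attains it. With this, the projection step is routine. No hypothesis $\ell\ge 3$ is needed, since $U$ was constructed without it.
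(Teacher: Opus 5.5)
Your argument is correct and is essentially the paper's proof: the paper also forms $(A\times B\times M)\cap U$ and projects onto the third coordinate. The only difference is that the paper cuts down by $U$ with the intersection item \cref{prop:constr_stab} (\labelcref{item:inter}), which applies because the two sets meet at $(a,b,a)$, rather than with restricted comprehension. For the same reason your explicit formula needs no large constant: any $K>0$ works, since $f_A(x)+f_B(y)-\inf f_A-\inf f_B$ and $d(x,z)+d(y,z)-d(x,y)$ are both nonnegative and vanish simultaneously exactly when $x\in A$, $y\in B$, $z\in\qty{x,y}$.
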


\begin{proof} 
	By \cref{prop:constr_stab} (\labelcref{item:prod,item:inter}), the set
	\[\qty(A \times B \times M)\cap U = \qty{(a,b,c) \mid a\in A, b\in B, c \in \qty{a,b}}\]
	is constructible, and its projection onto its third coordinate is $A \cup B$, which is then constructible by \cref{prop:constr_stab} (\labelcref{item:proj}).
\end{proof}

\begin{prop}[One-dimensional complement]
	If $A$ is constructible, different from $M$, and has dimension 1, then its complement (in $M$) $A^c$ is also constructible.
\end{prop}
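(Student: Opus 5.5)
The plan is to express the distance to $A$ as an affine formula, using the quantification-over-$A$ trick from the proof of \cref{prop:defconstr}, and then take the $\argmin$ of its negative. Since every point of $A^c$ is at distance exactly $1$ from $A$, while points of $A$ are at distance $0$, the $\argmax$ of the distance to $A$ should be precisely $A^c$.

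First, let $A = \argmin_x f_A(x)$ with $f_A \in \affset{1}$, and work in a classical structure $M$. By \cref{prop:defconstr}, applied with the formula $\varphi(a,x) = d(a,x)$, the quantity $\inf_{a \in A} d(a,x)$ is given by an explicit affine formula. Since $0 \le d \le 1$ we may take $r = 0$ and $s = 1$ in \cref{eq:constr_quant}. We let $\varepsilon > 0$ be the discreteness constant of $f_A$ over classical structures, as in the proof of \cref{prop:defconstr}, and set $K = \frac{1}{\varepsilon} + 1$. We then define
\[h(x) = \inf_a \qty(K\qty(f_A(a) - \inf_z f_A(z)) + d(a,x)),\]
which is an affine formula in one free variable, with $\inf_z f_A(z)$ an affine sentence. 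By \cref{eq:constr_quant} we get $h(x) = \inf_{a \in A} d(a,x)$, using that $A_M \neq \varnothing$ as argued in that proof.

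Next, we evaluate $h$, using that $d$ is the trivial metric. If $x \in A$, taking $a = x$ gives $h(x) = 0$. If $x \notin A$, then $d(a,x) = 1$ for every $a \in A$, so $h(x) = 1$. Hence $h$ is the indicator of $A^c$, and
\[A^c = \argmin_x \qty(-h(x)).\]
This identity holds provided $A^c \ne \varnothing$, which is exactly the hypothesis $A \ne M$; in that case the minimum of $-h$ is $-1$ and is attained exactly on $A^c$. This exhibits $f_{A^c} = -h$ explicitly, so $A^c$ is constructible of dimension $1$.

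The main point to check carefully is the hypothesis $A \ne M$. It must be read as holding in every structure under consideration (i.e.\ $A_M \neq M$ for all $M$), so that the $\argmin$ really describes $A^c_M$ uniformly rather than collapsing to all of $M$ in some structure where $A_M = M$. The rest is routine, since the only nontrivial ingredient, \cref{eq:constr_quant}, is already established. The dimension-$1$ hypothesis is used only so that $d(a,x)$ is the trivial metric on single elements. For higher dimension one would need $d_n$, which is not yet available as an affine formula.
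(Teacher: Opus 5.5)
Your proposal is correct and follows the paper's proof: you express $\inf_{a\in A} d(a,x)$ as an affine formula via \cref{eq:constr_quant}, note that it is the $\{0,1\}$-valued indicator of $A^c$, and take the $\argmin$ of its negative, with $A \ne M$ ensuring that the minimum $-1$ is attained exactly on $A^c$. You make explicit the constants ($r=0$, $s=1$, $K = \frac{1}{\varepsilon}+1$) that the paper leaves implicit. Your remark that $A_M \ne M$ must hold in every structure under consideration is a fair sharpening of the paper's hypothesis.
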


\begin{proof}
	Notice that for all $x \in M$,
	\[\inf_{a \in A} d(x,a) = \begin{cases}0 & \text{if }x\in A \\ 1 & \text{otherwise}\end{cases}\]
	Therefore,
	\[A^c = \argmin_x -\inf_{a \in A} d(x,a)\]
	which is constructible because $A$ is.
	
	The assumption that $A \ne M$ is required to ensure that the previous equality does hold. If $A = M$, $\inf_{a \in A} d(x,a)$ is always equal to $0$, hence $\argmin_x -\inf_{a \in A} d(x,a) = M$ instead of $\varnothing$.
\end{proof}

\begin{rem}
	The $\qty{0,1}$-valuedness of $d$ is crucial in both of the previous propositions (specifically in the definition of $U$, in the case of \cref{cor:union1}), so these proofs will only generalize to higher dimensions once the $\theta_n$ are defined.
\end{rem}

\begin{notation}[Constructible comprehensions]
	In order to make notations less cumbersome, we will from now on freely write comprehensions where it is clear they can be defined constructively, e.g. $\qty{(a,b,c) \in X \mid a \ne b}$ for $\qty{(a,b,c) \in X \mid (a,b) \in\bar\Delta}$, which is constructible as soon as $X$ is by \cref{prop:constr_stab} (\labelcref{item:compr}).
	
	We will also use existential quantifiers in such comprehensions, as they can be interpreted using projections. For example,
	\[\qty{(a,b,c) \mid \exists d \ne c, d \in \qty{a,b}}\]
	should be interpreted as the projection of the set
	\[\qty{(a,b,c,d) \mid d \ne c\ \wedge\ d \in \qty{a,b}} = \qty{(a,b,c,d) \mid (c,d) \in \bar\Delta\ \wedge\ (a,b,d) \in U}\] on its first three coordinates, which is constructible by \cref{prop:constr_stab} (\labelcref{item:proj,item:compr}).
\end{notation}

\subsection{Proof of \cref{thm:main_weak}}
\label{sec:proof}

Now that the notion of constructible sets has been properly introduced, we can state the intermediate result that was mentioned in \cref{rem:x} :

\begin{prop}
\label{prop:x}
The set $X = \qty{(a,b,c,d) \in M^4 \mid a = c\ \vee\ b = d}$ is constructible.
\end{prop}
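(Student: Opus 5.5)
The plan is to write $X$ as the $\argmin$ of an explicit affine formula. Each candidate will be checked by case exhaustion over the $15$ types of quadruplets listed in \cref{sec:types}; since $\ell \ge 3$ is assumed, only the types realizable with at least three points matter. In type notation, with coordinates ordered $(a,b,c,d)$, the target set is
\[X = \qty{\mathbf{0000}, \mathbf{0001}, \mathbf{0010}, \mathbf{0011}, \mathbf{0012}, \mathbf{0100}, \mathbf{0101}, \mathbf{0111}, \mathbf{0121}, \mathbf{0102}, \mathbf{0111}}\]
(those with $t_1 = t_3$ or $t_2 = t_4$), and its complement consists of $\mathbf{0110}, \mathbf{0112}, \mathbf{0120}, \mathbf{0122}, \mathbf{0123}$.

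\textbf{Obstacle.} The naive candidate $d(a,c)+d(b,d)$ only captures the conjunction $a=c \wedge b=d$. We need the minimum $\min(d(a,c),d(b,d))$, which equals $d(a,c)+d(b,d)-d_2((a,b),(c,d))$ and is therefore circular. So the disjunction must be manufactured another way. The paper's model is the set $U$, which encodes a disjunction of equalities sharing one variable.

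\textbf{First attempt: an auxiliary witness.} I would find a constructible set $W \subseteq M^{4+k}$, built from $\Delta$, $\bar\Delta$ and $U$ using \cref{prop:constr_stab} (\labelcref{item:prod,item:inter,item:compr}), whose projection onto the first four coordinates is $X$. \cref{prop:constr_stab} (\labelcref{item:proj}) then finishes the proof. Concretely, I would look for witnesses $e,f$ satisfying memberships of the form $(a,e,c) \in U$, $(b,f,d) \in U$, together with a linking condition between $e$ and $f$ that forces at least one of the two memberships to be realized by the equality we want. The linking condition could be $e \ne f$, or a membership such as $(e,f,x) \in U$ for a further witness $x$. The delicate point is that $U$ allows the "wrong" disjunct, namely $c = e$ instead of $c = a$. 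The linking constraint must make it impossible to use the wrong disjunct in both coordinates at once, while never making the intersection empty. Nonemptiness is required by \cref{prop:constr_stab} (\labelcref{item:inter}).

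\textbf{Fallback: a direct formula.} Failing this, I would search for a direct affine combination
\[f_X = \alpha\, d(a,c) + \beta\, d(b,d) + \gamma\, \varphi(a,b,c,d),\]
where $\varphi$ is one of the quantified formulas already available. Examples are $\sup_x(4 - d(x,a)-d(x,b)-d(x,c)-d(x,d))$ (largest multiplicity), or the cardinality formulas \cref{eq:card3,eq:card4} applied to $\qty{a,b,c,d}$ and to sub-triples. I would tabulate the value of each ingredient on the $15$ types and solve the small linear feasibility problem requiring $f_X$ to be constant and minimal on the types of $X$ and strictly larger on the five remaining types. Verification is then a finite case check, with a box on any step where a witness "outside $\qty{p,q}$" is needed.

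The main obstacle is the linking step in the first approach, or equivalently finding coefficients in the fallback. Controlling spurious minimizers is where I expect to iterate.
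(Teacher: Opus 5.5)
Your proposal does not yet contain a proof. The step you call ``the main obstacle'' (the linking condition, or the coefficients) is the whole content of the proposition, and the concrete candidates you mention fail. With $c\in\{a,e\}$ and $d\in\{b,f\}$, every tuple with $a\ne c$ and $b\ne d$ forces $e=c$ and $f=d$. The link $e\ne f$ therefore lets in every such tuple with $c\ne d$ (types $\mathbf{0012}$, $\mathbf{0123}$). The link $(e,f,x)\in U$ is satisfied by $x=e$, so as stated it links nothing. The fallback ansatz $\alpha\,d(a,c)+\beta\,d(b,d)+\gamma\varphi$ is infeasible. Both $\mathbf{0000}$ and $\mathbf{0101}$ lie in $X$ with $d(a,c)=d(b,d)=0$, yet each formula you name separates them: the cardinality of $\{a,b,c,d\}$ or of a sub-triple gives $1$ vs.\ $2$, and the maximal multiplicity gives $4$ vs.\ $2$. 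This forces $\gamma=0$, and then constancy on $\mathbf{0000},\mathbf{0001},\mathbf{0010}$ forces $\alpha=\beta=0$. Your type table is also wrong: $\mathbf{0011}$ and $\mathbf{0012}$ have $a\ne c$ and $b\ne d$. So $X=\{\mathbf{0000},\mathbf{0001},\mathbf{0010},\mathbf{0100},\mathbf{0101},\mathbf{0102},\mathbf{0111},\mathbf{0121}\}$, and the complement has seven types, not five. Any coefficient search would have targeted the wrong set.

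The missing idea is how to encode a disjunction by a single witness lying in a pair, which only works when both alternatives are always available to the witness. The paper achieves this by writing $X=A_1\cap A_2\cap A_3$. Each $A_i$ is the union of one equality with an auxiliary constructible set whose fibre over the remaining coordinates is never empty. The auxiliary sets are $C_{\le 2}$, obtained from \cref{eq:card4} by a restricted $\argmin$ over $\{c\ne d\}$, and $B=\{\mathbf{000},\mathbf{010},\mathbf{012}\}$; this is where $\ell\ge 3$ is used. Your first framework can in fact be closed more directly. Require one witness $x$ to satisfy $x\in\{a,b\}$, $x\in\{b,c\}$, $x\in\{c,d\}$ and $x\in\{d,a\}$. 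If $x=a$ this forces $a=c$ or $a=b=d$; if $x=b$ it forces $b=d$ or $a=b=c$. Conversely, $x=a$ (resp.\ $x=b$) works when $a=c$ (resp.\ $b=d$). So $X$ is the projection of an intersection of four copies of $U$, namely $X=\argmin_{a,b,c,d}\inf_x\sum_{(p,q)}\bigl(d(p,x)+d(q,x)-d(p,q)\bigr)$ with $(p,q)$ ranging over $(a,b),(b,c),(c,d),(d,a)$. This formula is $0$ on $X$ and $1$ or $2$ elsewhere, and nothing in the argument uses $\ell\ge 3$.
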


We will need to state and prove several intermediate lemmas before proving \cref{prop:x}.

\begin{lem}
\label[lemma]{lem:le2}
The set $C_{\le2} = \qty{(a,b,c) \mid \qty|\qty{a,b,c}| \le 2}$ is constructible.
\end{lem}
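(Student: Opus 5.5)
The set $C_{\le 2}$ consists of the types $\mathbf{000}, \mathbf{001}, \mathbf{010}, \mathbf{011}$; only $\mathbf{012}$ is excluded. The plan is to take the argmin of the affine formula
\[f(a,b,c) = \qty|\qty{a,b,c}| = \inf_x \qty(d(a,x)+d(b,x)+d(c,x)+1),\]
which is affine by \cref{eq:card3}. Its argmin over all of $M^3$ is only $\Delta$-like, namely the type $\mathbf{000}$. That is too small, so $f$ cannot be minimized directly.

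Instead I would use a formula that is constant, minimal, on $\mathbf{000}, \mathbf{001}, \mathbf{010}, \mathbf{011}$ and strictly larger on $\mathbf{012}$. A natural candidate combines the cardinality with the pairwise distances. Write $s = d(a,b)+d(a,c)+d(b,c)$. On the five types, $(s, \qty|\qty{a,b,c}|)$ takes the values $(0,1)$ on $\mathbf{000}$, $(2,2)$ on each type with exactly two distinct elements, and $(3,3)$ on $\mathbf{012}$. The quantity $\lambda\qty|\qty{a,b,c}| - \mu s$ then gives
\[\lambda \ \text{ on } \mathbf{000}, \qquad 2\lambda - 2\mu \ \text{ on the middle types}, \qquad 3\lambda - 3\mu \ \text{ on } \mathbf{012}.\]
Making the first two equal forces $\lambda = 2\mu$, say $\lambda=2$, $\mu=1$. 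The values are then $2, 2, 3$.

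So I would take
\[f_{C_{\le2}}(a,b,c) = 2\inf_x\qty(d(a,x)+d(b,x)+d(c,x)+1) - d(a,b)-d(a,c)-d(b,c).\]
I would verify by cases on the type labeling of \cref{sec:types} that it equals $2$ exactly on $C_{\le 2}$ and $3$ on $\mathbf{012}$. The argmin is therefore $C_{\le2}$. Since $\ell\ge 2$, the minimum value $2$ is always attained, so this holds uniformly in $M$.

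The main obstacle is finding this affine combination: no single simple formula separates $\mathbf{012}$ from the rest, and the type $\mathbf{000}$ must be balanced against the two-element types. The case check for \cref{eq:card3} is already in the paper, so the remaining verification is routine. This construction also does not need $\ell\ge 3$.
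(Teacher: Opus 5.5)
Your proof is correct, and it takes a different, more economical route than the paper. The case check works: your $f$ equals $2$ on $\mathbf{000},\mathbf{001},\mathbf{010},\mathbf{011}$ and $3$ on $\mathbf{012}$. Moreover \cref{eq:card3} holds for every $\ell$, so $\argmin f = C_{\le 2}$ in every model. When $\ell=2$ this argmin is all of $M^3$, as it should be, and the minimum is always attained because $\mathbf{000}$ is always realized.

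The paper instead lifts to dimension $4$. It takes $E=\{(a,b,c,d)\mid c\ne d\}$ and forms $E'=\argmin_{E}\lvert\{a,b,c,d\}\rvert$, using \cref{eq:card4} and the restricted argmin of \cref{prop:constr_stab} (\labelcref{item:restarg}). It then projects away $d$, which serves as a witness $d\ne c$ with $\{a,b,c\}\subseteq\{c,d\}$. That approach exercises the restricted-argmin and projection machinery the paper reuses for $A_1$, $A_2$, $A_3$, but it has costs:
\begin{itemize}
\item \cref{eq:card4} requires $\ell\ge 3$;
\item the restricted argmin introduces a penalty constant $K_1$ together with an inner infimum;
\item the projection adds another $\inf$, which the paper later sidesteps by keeping $d$ as an extra variable.
\end{itemize}

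Your formula avoids all of this. It works for every $\ell$ and contains a single $\inf$ with positive coefficient. In the quantifier analysis it would sit inside $-K_0 f_{X'}$ in \cref{eq:theta2_alt}, where it becomes a $\sup$ that merges with the outer one. This replaces the $\inf$/$\sup$ pair from \cref{eq:c4} that \cref{lem:theta2_form} counts.

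Your remark that a quantifier is unavoidable here is also right. A quantifier-free combination $\alpha\, d(a,b)+\beta\, d(a,c)+\gamma\, d(b,c)$ that is constant on the four types of $C_{\le 2}$ must satisfy $\beta+\gamma=\alpha+\gamma=\alpha+\beta=0$. That forces $\alpha=\beta=\gamma=0$, so it cannot separate $\mathbf{012}$ from the other types.
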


\begin{proof}
Consider the constructible set $E = \qty{(a,b,c,d) \mid c \ne d}$. For all $(a,b,c,d) \in E$, $c\ne d$ so $\qty|\qty{a,b,c,d}| \ge 2$, and this minimum is reached. Then
\[E' = \argmin_{(a,b,c,d) \in E} \qty|\qty{a,b,c,d}| = \qty{(a,b,c,d) \mid c \ne d\ \wedge\ \qty|\qty{a,b,c,d}| = 2}\]
is constructible, using \cref{eq:card4} and \cref{prop:constr_stab} (\labelcref{item:restarg}). We can then define $C_{\le2}$ as the projection of $E'$ onto its first three coordinates, constructible by \cref{prop:constr_stab} (\labelcref{item:proj}). More explicitly,
\[C_{\le2} = \qty{(a,b,c) \mid \exists d \ne c, \qty|\qty{a,b,c,d}| = 2}\]

Indeed, if $\qty|\qty{a,b,c}| = 1$, any $d \ne c$ works. If $\qty|\qty{a,b,c}| = 2$, there is an element of the set that is distinct from $c$, and $d$ can be taken equal to it. However, if $\qty|\qty{a,b,c}| = 3$, $\qty|\qty{a,b,c,d}| > 2$ will hold for any choice of $d$, showing the equality.
\end{proof}

\begin{lem}
\label[lemma]{lem:A1}
The set \[A_1 = \qty{(a,b,c,d) \mid \qty|\qty{a,b,c}| \le 2\ \vee\ b = d}\] is constructible.
\end{lem}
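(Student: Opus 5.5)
The plan is to realise $A_1$ as the projection of a constructible subset of $M^5$: an auxiliary witness $e$ will encode the disjunction, in the same way that $U$ encodes ``$c \in \qty{a,b}$'' and the proof of \cref{lem:le2} encodes $\qty|\qty{a,b,c}| \le 2$. The guiding observation is the computation already made in \cref{lem:le2}. For $e \ne c$ and $\boxed{\ell \ge 3}$, the infimum $\inf_{e \ne c} \qty|\qty{a,b,c,e}|$ equals $2$ when $(a,b,c) \in C_{\le 2}$ and $3$ otherwise. Adding $d(b,d)$, I consider
\[g(a,b,c,d) = \inf_{e \ne c}\qty(\qty|\qty{a,b,c,e}| + d(b,d)),\]
which is an affine formula by \cref{eq:card3,eq:card4} and \cref{prop:defconstr}, since the restriction $e \ne c$ is a quantification over the constructible set $\bar\Delta$ with a parameter. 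A case check gives $g \in \qty{2,3}$ exactly on $A_1$ and $g = 4$ off $A_1$. The case checks are: $b = d$ gives at most $3$; $b \ne d$ with $(a,b,c) \in C_{\le2}$ gives $3$; and $b \ne d$ with $\qty|\qty{a,b,c}| = 3$ gives $4$.

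The difficulty is that $A_1$ is the sublevel set $\qty{g \le 3}$ rather than $\argmin g$, because $g$ takes two distinct values on $A_1$. I will therefore pad so that the two ``good'' values are equalised. Concretely, I add a second witness $e'$ ranging over $\qty{b,d}$, i.e.\ $(b,d,e') \in U$. I then choose the counting term so that a cheaper configuration in one disjunct is compensated by a forced cost elsewhere. The first candidate I will try is
\[Y = \argmin_{(a,b,c,d,e,e') \in Z} \qty(\qty|\qty{a,e',c,e}| + d(b,d)), \qquad Z = \qty{(a,b,c,d,e,e') \mid e \ne c,\ e' \in \qty{b,d}},\]
where $Z$ is constructible by \cref{prop:constr_stab} (\labelcref{item:prod,item:inter,item:compr}), and then $A_1$ should be the projection of $Y$ onto $(a,b,c,d)$ by \cref{prop:constr_stab} (\labelcref{item:restarg,item:proj}). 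I would tune the weights, for instance replacing $d(b,d)$ by a multiple or swapping which variable $e'$ replaces, by running through the 15 types of quadruplets listed in \cref{sec:types}. The target is that every type in $A_1$ attains one common minimal value and every type outside it is strictly larger. The boxed hypothesis $\ell \ge 3$ is needed whenever an element distinct from two given ones must exist, as in \cref{eq:card4}.

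The main obstacle is this equalisation. I expect the naive weighting to fail because tuples with $\qty|\qty{a,b,c}| = 1$ are ``too cheap'' compared with, say, type $\mathbf{0120}$ with $b = d$. If no single weighting works, my fallback is to treat the two disjuncts separately. I would build $P = \qty{(a,b,c,d) \mid (a,b,c) \in C_{\le 2}}$ and $Q = \qty{(a,b,c,d) \mid b = d}$, both constructible by \cref{lem:le2} and \cref{prop:constr_stab} (\labelcref{item:compr}). I would then imitate the proof of \cref{cor:union1}, projecting a constructible set of witnesses $(p,q,r)$ in which a $U$-type selector coordinate chooses which disjunct is certified. The selector has to be a single coordinate in order to stay within dimension one, where $U$ is available. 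Checking that this projection equals $P \cup Q$ is again a finite case check over the types of $(a,b,c,d)$.
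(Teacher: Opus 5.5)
There is a genuine gap: the one concrete construction you write down fails, and both proposed repairs are left unspecified.

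\textbf{The candidate $Y$ fails.} Its objective is $\ge 2$ because $e \ne c$. It equals $2$ exactly when $b = d$ (so $e' = b$) and $\lvert\{a,b,c\}\rvert \le 2$. Hence the projection of $Y$ is $\{b = d\} \cap \{\lvert\{a,b,c\}\rvert \le 2\}$, the intersection of the two disjuncts rather than their union; it misses e.g.\ $\mathbf{0121}$ and $\mathbf{0012}$.

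Changing the weight of $d(b,d)$ does not help:
\begin{itemize}
\item a positive weight gives this intersection;
\item weight $0$ gives $\{\lvert\{a,b,c\}\rvert \le 2 \vee \lvert\{a,d,c\}\rvert \le 2\}$, which wrongly contains $\mathbf{0120}$;
\item a negative weight excludes all of $\{b = d\}$.
\end{itemize}

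The real defect is the orientation of your pair constraint. The condition $e' \in \{b,d\}$ pins the witness to $b$ when $b = d$, so it is never free. What works is a free witness $e$ subject to two hard constraints: $(a,e,c) \in C_{\le 2}$ and $b \in \{e,d\}$. If $b = d$, the second constraint is automatic and $e = a$ satisfies the first. If $b \ne d$, the second forces $e = b$. Thus
\[
A_1 = \{(a,b,c,d) \mid \exists e,\ \lvert\{a,e,c\}\rvert \le 2 \wedge b \in \{e,d\}\},
\]
which is constructible by \cref{lem:le2}, \cref{prop:pairdef} and \cref{prop:constr_stab}. This is exactly the paper's proof.

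\textbf{The fallback is the right idea but omits what makes it work.} It is the paper's ``restricted union''. You would project a set in which one coordinate is selected from $\{p,q\}$, with $p$ certifying $P$, $q$ certifying $Q$, and the other three coordinates shared. This yields $P \cup Q$ only if, for every choice of the other three coordinates, the fibre in the selected coordinate meets both $P$ and $Q$. Here that holds only for the coordinate $b$: take $p = a$ and $q = d$.
\begin{itemize}
\item Selecting $a$ or $c$ loses $P \setminus Q$, since the $Q$-fibre is empty when $b \ne d$.
\item Selecting $d$ loses $Q \setminus P$, since the $P$-fibre is empty when $\lvert\{a,b,c\}\rvert = 3$.
\end{itemize}
With $b$ as selector and $q = d$ forced, the fallback collapses to the displayed formula above. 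As written, however, both the choice of selector and the fibre condition are deferred to a case check you have not carried out. So the proposal does not yet contain a proof.
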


\begin{lem}
\label[lemma]{lem:A2}
The set \[A_2 = \qty{(a,b,c,d) \mid \qty|\qty{b,c,d}| \le 2\ \vee\ c = a}\] is constructible.
\end{lem}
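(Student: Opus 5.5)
The plan is to deduce \cref{lem:A2} from \cref{lem:A1} by a relabelling of coordinates, without building a new formula. I will first check that constructibility is preserved under permutations of coordinates, and then find the permutation that carries $A_1$ onto $A_2$.

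\emph{Invariance under permutations.} Let $\sigma$ be a permutation of $\irng{1,n}$ and $D = \argmin_x f_D(x) \subseteq M^n$. Set
\[D^\sigma = \qty{(x_{\sigma(1)}, \ldots, x_{\sigma(n)}) \mid (x_1,\ldots,x_n) \in D},\]
and let $g(y_1,\ldots,y_n) = f_D(y_{\sigma^{-1}(1)}, \ldots, y_{\sigma^{-1}(n)})$. Then $g$ is again an affine formula, since we only rename free variables. The map $x \mapsto (x_{\sigma(i)})_i$ is a bijection of $M^n$ with $g((x_{\sigma(i)})_i) = f_D(x)$, so $\inf g = \inf f_D$ and $\argmin g = D^\sigma$. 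Alternatively, this follows from \cref{prop:constr_stab}: take the product of $D$ with $M^n$, impose the diagonal conditions $y_i = x_{\sigma(i)}$ via $\Delta$ and \cref{item:compr}, and project onto the $y$-coordinates.

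\emph{Matching the two sets.} Write $A_1 = \qty{(x_1,x_2,x_3,x_4) \mid \qty|\qty{x_1,x_2,x_3}| \le 2 \vee x_2 = x_4}$. Substituting $x_1 = d$, $x_2 = c$, $x_3 = b$, $x_4 = a$ turns the condition into $\qty|\qty{d,c,b}| \le 2 \vee c = a$. Since the cardinality of a set does not depend on the order of its elements, this is exactly the condition defining $A_2$. Hence
\[A_2 = \qty{(a,b,c,d) \mid (d,c,b,a) \in A_1},\]
which is the image of $A_1$ under the coordinate-reversing permutation. Explicitly, $f_{A_2}(a,b,c,d) = f_{A_1}(d,c,b,a)$, where $f_{A_1}$ is the formula produced by \cref{lem:A1}.

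\emph{Main obstacle.} There is essentially none. The only point needing care is that the permutation argument is uniform in $M \models C_{\ge 3}$, and it is, because it is a purely syntactic renaming of variables. If \cref{lem:A1} were not available, I would instead repeat its construction with the roles of coordinates swapped. In that case the hard part would be the union of $C_{\le 2}$ (from \cref{lem:le2}) with the condition $c = a$: unions are only available in dimension one (\cref{cor:union1}), so this union would need a witness-based encoding in the spirit of the proof of \cref{lem:le2}.
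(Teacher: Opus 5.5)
Your proposal is correct and takes the same approach as the paper, which disposes of \cref{lem:A2} in one line as following from \cref{lem:A1} ``by symmetry''. Your explicit relabelling $(a,b,c,d)\mapsto(d,c,b,a)$, with $f_{A_2}(a,b,c,d) = f_{A_1}(d,c,b,a)$, makes precise the symmetry the paper leaves implicit, and your check that $\argmin$ is preserved under a renaming of variables is sound.
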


\begin{proof}
The corollary follows from the first result by symmetry.

Let us show that
\[\qty{(a,b,c,d) \mid \qty|\qty{a,b,c}| \le 2\ \vee\ b = d} = \qty{(a,b,c,d) \mid \exists e, \qty|\qty{a,e,c}| \le 2\ \wedge\ (b = e\ \vee\ b = d)}\]

\boxed{\subseteq} If $\qty|\qty{a,b,c}| \le 2$, then $e = b$ works. Otherwise, any $e \in \qty{a,c}$ works.

\boxed{\supseteq} If $(a,b,c,d)$ is an element of the right-hand side, either $b = d$ and the result is clear, or $b = e$, meaning $\qty|\qty{a,b,c}| \le 2$.

Finally, the right-hand side of the equality is constructible by \cref{lem:le2,prop:pairdef}.
\end{proof}

\begin{rem}
	Consider the constructible set $A_1 \cap A_2$. Reasoning by case analysis shows that this set contains exactly the types \textbf{0000, 0001, 0010, 0011, 0100, 0101, 0102, 0110, 0111, 0112, 0121, 0122}. Intuitively, all the types that cannot be contained in $X$ (as in the statement of \cref{prop:x}) for cardinality reasons have been eliminated.
	
	Among the remaining ones, all of $\mathbf{0011, 0110, 0112, 0122}$ remain to be eliminated.
\end{rem}

\begin{lem}
\label[lemma]{lem:bconstr}
The set \[B = \qty{(a,b,c) \mid a = b = c\ \vee\ (a \ne b\ \wedge\ b \ne c)} = \mathbf{\qty{000, 010, 012}}\] is constructible.
\end{lem}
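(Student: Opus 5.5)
The plan is to exhibit $B$ directly as the $\argmin$ of a single affine formula, with no need for the stability properties. By symmetry in $a \leftrightarrow c$, it is natural to try a candidate of the form
\[f(a,b,c) = \alpha\, d(a,b) + \alpha\, d(b,c) + \beta\, d(a,c) + \gamma\,\qty|\qty{a,b,c}|,\]
where $\qty|\qty{a,b,c}|$ is the affine formula $\inf_x \qty(d(a,x)+d(b,x)+d(c,x)+1)$ from \cref{eq:card3}. With a negative coefficient this infimum becomes a $\sup$, so the formula stays affine.

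Next I tabulate $f$ on the five types of triples:
\begin{itemize}
\item $\mathbf{000}$ gives $\gamma$;
\item $\mathbf{001}$ and $\mathbf{011}$ both give $\alpha+\beta+2\gamma$;
\item $\mathbf{010}$ gives $2\alpha+2\gamma$;
\item $\mathbf{012}$ gives $2\alpha+\beta+3\gamma$.
\end{itemize}
We need the three values on $\mathbf{000},\mathbf{010},\mathbf{012}$ to be equal, and the common value on $\mathbf{001},\mathbf{011}$ to be strictly larger. The equalities force $\alpha = -\gamma/2$ and $\beta = -\gamma$. The two bad types then take the value $\gamma/2$, which exceeds $\gamma$ exactly when $\gamma < 0$. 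Choosing $\gamma = -2$ gives
\[f(a,b,c) = d(a,b) + d(b,c) + 2d(a,c) - 2\inf_x \qty(d(a,x)+d(b,x)+d(c,x)+1).\]

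The final step is to verify this choice case by case:
\begin{itemize}
\item $\mathbf{000}$: $0+0+0-2 = -2$;
\item $\mathbf{010}$: $1+1+0-4 = -2$;
\item $\mathbf{012}$: $1+1+2-6 = -2$;
\item $\mathbf{001}$: $0+1+2-4 = -1$;
\item $\mathbf{011}$: $1+0+2-4 = -1$.
\end{itemize}
Hence $\argmin_{a,b,c} f = \qty{\mathbf{000},\mathbf{010},\mathbf{012}} = B$.

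The only point needing care is that \cref{eq:card3} holds for all $\ell$, so no boxed hypothesis is used here, apart from the fact that $\mathbf{012}$ occurs only when $\ell \ge 3$. For $\ell = 2$ the $\argmin$ is still $\qty{\mathbf{000},\mathbf{010}}$, which is consistent with the statement. There is no real obstacle: the work lies entirely in solving the small linear system for $(\alpha,\beta,\gamma)$. If I preferred a derivation through the stability properties, I would instead write $B$ as the $\argmin$ over all triples of a quantity penalizing exactly one of $a=b$, $b=c$. The direct formula above already achieves this.
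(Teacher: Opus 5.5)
Your proof is correct, and it takes a genuinely different route from the paper. With $f = d(a,b)+d(b,c)+2d(a,c)-2\qty|\qty{a,b,c}|$, the values are $-2$ on $\mathbf{000},\mathbf{010},\mathbf{012}$ and $-1$ on $\mathbf{001},\mathbf{011}$. Also, \cref{eq:card3} needs no hypothesis on $\ell$, since the infimum is attained at one of $a,b,c$.

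The paper never writes a formula for $B$. It proves the set identity $B=\qty{(a,b,c)\mid \exists d\ne c,\ \exists e\ne a,\ b\in\qty{a,d}\wedge b\in\qty{c,e}}$ by a short case analysis. It then obtains constructibility from the closure properties: pair membership, restricted comprehension with penalty constants as in \cref{eq:constr_quant}, and projection away from the witnesses $d,e$.

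Your argument is instead the ``linear algebra on types'' idea of \cref{sec:alg}, done by hand on five types. It buys a compact explicit formula in three free variables with a single bound variable, no penalty constants and no projection. It is checkable by a five-row table and valid for every $\ell$.

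The paper's route buys uniformity with the neighbouring lemmas: $C_{\le2}$, $A_1$, $A_3$ and the restricted-union proposition all follow the same witness-plus-pair-membership pattern, which is the conceptual point of \cref{sec:elem}. It also leaves the option of keeping $d,e$ as free variables, giving $B$ a quantifier-free defining formula in five variables.

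Some such device is unavoidable. Once $\mathbf{012}$ is realized, no affine combination $p\,d(a,b)+q\,d(b,c)+r\,d(a,c)+s$ has $\argmin$ equal to $B$. Equality on $B$ forces $p+q=0$ and $r=0$, while strictness on $\mathbf{001},\mathbf{011}$ requires $q>0$ and $p>0$. Your $\qty|\qty{a,b,c}|$ term thus plays exactly the role of the paper's existential witnesses.
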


\begin{proof}
We claim that
\[B = \qty{(a,b,c)\mid \exists d \ne c, \exists e\ne a, b \in \qty{a,d}\ \wedge\ b \in \qty{c,e}}\]

\boxed{\subseteq} Let $(a,b,c) \in B$. If $a = b = c$, then any choice of $d$ and $e$ works. If $a \ne b$ and $b \ne c$, then $d = e = b$ works.
 
\boxed{\supseteq} Let $(a,b,c)$ be an element of the RHS, and let $d, e$ be as in the definition. If $b = a$, then since $a \ne e$ and $b \in \qty{c,e}$ we have $b = c$, whence $a = b = c$. Otherwise, if $b \ne a$, then $b = d \ne c$, i.e. $a \ne b$ and $b \ne c$.

Finally, the RHS is constructible by \cref{prop:pairdef}.
\end{proof}

\begin{lem}
\label[lemma]{lem:A3}
The set $A_3 = \qty{(a,b,c,d) \mid a = c\ \vee\ (b,c,d) \in B}$ is constructible.
\end{lem}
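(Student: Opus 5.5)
We will prove \cref{lem:A3} in the same way as \cref{lem:A1}. The idea is to add an auxiliary existentially quantified variable that ``absorbs'' the disjunction, and then to use the stability properties of \cref{prop:constr_stab} together with \cref{prop:pairdef} and \cref{lem:bconstr}. Concretely, we aim for the equality
\[A_3 = \qty{(a,b,c,d) \mid \exists e,\ (b,e,d) \in B\ \wedge\ (c = a\ \vee\ c = e)}.\]
The right-hand side says $c \in \qty{a,e}$, i.e. $(a,e,c) \in U$. It is therefore the projection onto the first four coordinates of
\[\qty{(a,b,c,d,e) \mid (b,e,d) \in B\ \wedge\ (a,e,c) \in U},\]
which is constructible by \cref{prop:constr_stab} (\labelcref{item:compr,item:proj}), using \cref{lem:bconstr} and the constructibility of $U$. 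This set is nonempty: any constant tuple lies in it. Hence the comprehension is legitimate.

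There are two inclusions to check. For $\subseteq$: if $(b,c,d) \in B$, take $e = c$, so that $(b,e,d) \in B$ and $c = e$. If $(b,c,d) \notin B$ but $a = c$, we need some $e$ with $(b,e,d) \in B$. Taking $e = b$ gives the type $\mathbf{000}$ when $b = d$. When $b \ne d$, we need $e \ne b$ and $e \ne d$, which is possible because $\boxed{\ell \ge 3}$; this gives the type $\mathbf{012}$. Such an $e$ always exists, and $c = a$ then finishes this case. For $\supseteq$: given $e$ as in the right-hand side, either $c = a$ and we are done, or $c = e$, in which case $(b,c,d) = (b,e,d) \in B$.

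The main point to be careful about is the $\subseteq$ direction in the case $a = c$. There we must exhibit a witness $e$ making $(b,e,d) \in B$ for arbitrary $b, d$, and this is exactly where the three-element hypothesis enters. We will also double-check that $B$ was stated with the middle coordinate as the pivot ($a \ne b \wedge b \ne c$), so that the placement $(b,e,d)$ matches the definition of $A_3$. If the orientation were off, we would permute coordinates accordingly, since constructibility is clearly invariant under coordinate permutations: simply permute the variables of $f_B$.
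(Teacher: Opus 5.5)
Your proof is correct and follows the paper's proof essentially verbatim. You use the same auxiliary-variable characterization $A_3 = \{(a,b,c,d) \mid \exists e,\ (b,e,d)\in B \wedge c \in \{a,e\}\}$, the same witnesses ($e=c$ when $(b,c,d)\in B$; $e=b$ or $e\notin\{b,d\}$ via $\ell\ge 3$ when $a=c$), and the same reverse inclusion. Your explicit nonemptiness check for the comprehension is a small addition the paper leaves implicit, and the orientation worry is unfounded: $B=\{\mathbf{000},\mathbf{010},\mathbf{012}\}$ does pivot on the middle coordinate.
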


\begin{proof}
Let us show that
\[A_3 = \qty{(a,b,c,d) \mid \exists e, (b,e,d) \in B\ \wedge\ c \in \qty{a,e}}\]

\boxed{\subseteq} Let $(a,b,c,d) \in A_3$. If $a = c$, either $b = d$ and we can pick $e = b = d$, either $b \ne d$ and we pick $e$ different from both (using $\boxed{\ell \ge 3}$), such that in either case $(b,e,d) \in B$. Otherwise, if $(b,c,d) \in B$, taking $e = c$ works.

\boxed{\supseteq} Let $(a,b,c,d)$ be a member of the RHS, and take $e$ as in the definition. If $a = c$, there is nothing to do, so assume $a \ne c$. We must have $c = e$, heence $(b,e,d) = (b,c,d) \in B$.

Finally, since $B$ is constructible, $A_3$ also is.
\end{proof}

These seemingly disconnected results are actually a consequence of a broader theorem:

\begin{prop}[Restricted union]
Let $n > 1$, and $P, Q$ two constructible sets of dimension $n$. Suppose also that for all $x \in M^{n-1}$, there are $a, b \in M$ such that $(a,x) \in P$ and $(b,x) \in Q$. Then $P \cup Q$ is constructible.
\end{prop}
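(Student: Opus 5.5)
We generalize the trick used in the proofs of \cref{lem:A1} and \cref{lem:A3}: an auxiliary witness variable is quantified existentially and then pinned to the first coordinate through the pair set $U$. Write points of $M^n$ as $(y,x)$ with $y \in M$ and $x \in M^{n-1}$. The candidate description is
\[P \cup Q = \qty{(y,x) \mid \exists a, \exists b,\ (a,x) \in P\ \wedge\ (b,x) \in Q\ \wedge\ y \in \qty{a,b}}.\]
The right-hand side is the projection onto the coordinates $(y,x)$ of
\[W = \qty{(a,b,y,x) \mid (a,x) \in P\ \wedge\ (b,x) \in Q\ \wedge\ (a,b,y) \in U}.\]

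We first show that $W$ is constructible. Start from the constructible set $\qty{(a,b,y,x) \mid (a,x) \in P}$, which is $\argmin$ of $f_P(a,x)$ in the variables $(a,b,y,x)$. Intersect it successively with $\qty{(a,b,y,x) \mid (b,x) \in Q}$ and with $\qty{(a,b,y,x) \mid (a,b,y) \in U}$. Each step uses \cref{prop:constr_stab} (\labelcref{item:compr}), or equivalently \labelcref{item:inter} together with \labelcref{item:prod}.

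The only side condition is nonemptiness of each intermediate set, and this is exactly where the hypothesis is used. Pick any $x \in M^{n-1}$. By assumption there are $a, b$ with $(a,x) \in P$ and $(b,x) \in Q$. Setting $y = a$ gives a point of $W$, so $W$ and every intermediate set are nonempty. Projecting $W$ by \cref{prop:constr_stab} (\labelcref{item:proj}) then gives the right-hand side.

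It remains to check the set equality.

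\textbf{The inclusion $\subseteq$.} Suppose $(y,x) \in P$. Take $a = y$, and let $b$ be any element with $(b,x) \in Q$, which exists by hypothesis. Then $y \in \qty{a,b}$. The case $(y,x) \in Q$ is symmetric: take $b = y$ and use the witness for $P$ as $a$.

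\textbf{The inclusion $\supseteq$.} Given witnesses $a, b$, either $y = a$, so $(y,x) \in P$, or $y = b$, so $(y,x) \in Q$.

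Since the formula for $U$ is explicit, the resulting $f_{P \cup Q}$ is effective. Concretely, up to the scaling constants from \labelcref{item:restarg},
\[f_{P\cup Q}(y,x) = \inf_{a,b}\qty(\lambda f_P(a,x) + \lambda f_Q(b,x) + d(a,y)+d(b,y)-d(a,b)).\]

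The main obstacle is the nonemptiness requirement in \labelcref{item:compr}, and more subtly the $\argmin$ semantics. If, for some $x$, no witness existed for $P$ or for $Q$, the minimization would silently drop points rather than fail. The hypothesis guarantees a witness for every $x$, and with it the minimum of the combined function is attained exactly on $W$, namely at the sum of the separate minima. I would check this attainment carefully.

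The remark that \cref{lem:A1} and \cref{lem:A3} are instances of this result is then immediate. For \cref{lem:A3}, take $P = \qty{(c,a,b,d) \mid c = a}$ and $Q = \qty{(c,a,b,d) \mid (b,c,d) \in B}$, both with $c$ moved to the first coordinate. Every $x$ has a witness for $P$, namely $c = a$. For $Q$, a witness is found using $\boxed{\ell \ge 3}$, exactly as in the proof of \cref{lem:A3}.
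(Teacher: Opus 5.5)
Your proof is correct and is essentially the paper's argument: the paper uses the same identity $P \cup Q = \{(y,x) \mid \exists a, \exists b,\ (a,x)\in P \wedge (b,x)\in Q \wedge y \in \{a,b\}\}$ and the same two inclusions, with the hypothesis supplying the missing witness in the $\subseteq$ direction. The only difference is that you spell out why the right-hand side is constructible (intersection with $U$, the nonemptiness check, then projection), which the paper just calls ``clearly constructible''; your explicit formula is also fine, and $\lambda = 1$ already works because the three summands attain their minima simultaneously.
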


\begin{proof}
We claim that
\[P \cup Q = \qty{(a,x) \in M \times M^{n-1} \mid \exists b, \exists c, (b,x) \in P\ \wedge\ (c,x)\in Q\ \wedge\ a\in\qty{b,c}}\]
where the RHS is clearly constructible.

\boxed{\subseteq} Let $(a,x) \in P \cup Q$, with $a \in M$, $x \in M^{n-1}$. If $(a,x) \in P$, then it is enough to pick $b = a$ and any $c$ such that $(c,x) \in Q$ (which always exists by assumption). The case $(a,x) \in Q$ is analogous.

\boxed{\supseteq} If $(a,x)$ belongs to the RHS (with $a \in M$, $x \in M^{n-1}$), then there are $b, c$ such that $(b,x) \in P$, $(c,x) \in Q$ and $a = b$ or $a = c$. In one case like the other, we have $(a,x) \in P$ or $(a,x) \in Q$.
\end{proof}

We now have the required tools to prove \cref{prop:x}.

\begin{proof}[Proof of \cref{prop:x}]
We will show $X = A_1 \cap A_2 \cap A_3$, which is constructible by \cref{prop:constr_stab} (\labelcref{item:inter}) and \cref{lem:A1,lem:A2,lem:A3}.

\boxed{\subseteq} : Let $t = (a,b,c,d) \in X$. Assume $b = d$. The case $a=c$ is even simpler given the definition of $A_3$, so we may assume $a \ne c$. We then have $t \in A_1$ and since $\qty|\qty{b,c,d}| = \qty|\qty{b,c}| \le 2$, also $t \in A_2$. Since $a \ne c$ and $\qty|\qty{a,b,c}| \le 2$, either $b = a$ in which case $b \ne c \ne d$, either $b = c$ and $b = c = d$, and in any case $(b,c,d) \in B$, $t \in A_3$, and hence $t \in A_1 \cap A_2 \cap A_3$.

\boxed{\supseteq} : Let $t = (a,b,c,d) \in A_1 \cap A_2 \cap A_3$ and first assume $b \ne d$. Given that $t \in A_2$, if $a=c$ we are done, and else $\qty|\qty{b,c,d}| \le 2$. Then since $t \in A_3$, if $a=c$ we are also done, and else $(b,c,d) \in B$. Since $b \ne d$, it must be that $b \ne c \wedge c \ne d$, but in that case $\qty|\qty{b,c,d}| > 2 2$, a contradiction.

Now assume $a \ne c$. Since $t \in A_3$ we have $(b,c,d) \in B$. Since $t \in A_1$ we have $b = d$ (and we are done), or $\qty|\qty{a,b,c}| \le 2$, and therefore $a = b \ne c$ or $a \ne b = c$. In the former case, $(b,c,d) \in B$ implies $c \ne d$, but given that $t \in A_2$ and $a \ne c$, $\qty|\qty{b,c,d}| \le 2$ which forces $b = d$. In the latter case, $(b,c,d) \in B$ implies $b = c = d$, and in every case $b = d$.

In all cases, $t \in X$, and $X$ is constructible.
\end{proof}

\begin{cor}
\label[cor]{cor:theta2_def}
We may define $\theta_2$ in the following way :
\[\theta_2(a,b,c,d) = \sup_{\substack{x \in \qty{a,c} \\ y \in \qty{b,d} \\ (a,b,x,y) \in X}} \qty(d(a,x) + d(b,y))\]
\end{cor}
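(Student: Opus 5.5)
We must show two things: the right-hand side really is an affine formula, and its value is $d_2((a,b),(c,d))$. I would check the value first, with a direct case analysis on the constraint set
\[S(a,b,c,d) = \qty{(x,y) \mid x \in \qty{a,c},\ y \in \qty{b,d},\ (a,b,x,y) \in X},\]
which is exactly $\qty{(x,y) \mid x\in\qty{a,c},\ y \in \qty{b,d},\ x = a \vee y = b}$. The point is that $X$ excludes only the corner $(x,y) = (c,d)$ when $a\ne c$ and $b\ne d$, and that corner is the one that would give the value $2$.

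The cases are as follows.
\begin{itemize}
\item If $(a,b) = (c,d)$, then $S = \qty{(a,b)}$ and the supremum is $0$.
\item If $a \ne c$ and $b = d$, then $S = \qty{(a,b),(c,b)}$ and the supremum is $d(a,c) = 1$. The case $a = c$, $b\ne d$ is symmetric.
\item If $a\ne c$ and $b \ne d$, then $S = \qty{(a,b),(c,b),(a,d)}$, since $(c,d)$ fails both $c = a$ and $d = b$. The supremum is $\max(0,1,1) = 1$.
\end{itemize}
In every case $S$ is nonempty because it contains $(a,b)$, and the value is $d_2((a,b),(c,d))$. This part is routine.

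Next I would show the supremum is expressible affinely. For fixed parameters $a,b,c,d$, the set $S$ is the set of $(x,y)$ with $(a,c,x)\in U$, $(b,d,y)\in U$ and $(a,b,x,y)\in X$. I would obtain it as follows.
\begin{itemize}
\item Form the product $U\times U\times X$, which is constructible by \cref{prop:constr_stab} (\labelcref{item:prod}).
\item Impose the variable identifications with comprehension, \cref{prop:constr_stab} (\labelcref{item:compr}), intersecting with the diagonal conditions $\Delta$ on the appropriate coordinates. This gives the constructible set $\qty{(a,b,c,d,x,y) \mid x\in\qty{a,c},\ y\in\qty{b,d},\ (a,b,x,y)\in X}$ of dimension $6$.
\item Fix the parameters $(a,b,c,d)$ with \cref{prop:constr_stab} (\labelcref{item:param}). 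This is legitimate because the fibre is nonempty, as it contains $(a,b)$, and the formula $f_S(x,y)$ is obtained from the dimension-$6$ formula by plugging in the parameters.
\end{itemize}

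Equation \eqref{eq:constr_sup} of \cref{prop:defconstr} then rewrites
\[\sup_{(x,y)\in S}\big(d(a,x)+d(b,y)\big)\]
as an explicit affine formula in $a,b,c,d$:
\[\sup_{x,y}\Big(-\big(\tfrac{s-r}{\varepsilon}+1\big)\big(f(a,b,c,d,x,y)-\inf_{z}f(a,b,c,d,z)\big)+d(a,x)+d(b,y)\Big).\]
Here $r=0$ and $s=2$ bound the objective. The $\varepsilon$ is the gap in the finite value set of $f$, and it is uniform over classical structures.

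The main obstacle is the uniformity in the parameters. The $\varepsilon$ and the inner $\inf_z f$ must be taken from the global formula $f$ on $M^6$. They must not depend on the particular $(a,b,c,d)$, because the minimum of $f(a,b,c,d,\cdot)$ over the fibre has to equal the global minimum. That equality is exactly what \cref{prop:constr_stab} (\labelcref{item:param}) gives, once nonemptiness of every fibre is checked, which I did above. I would also make sure each intersection step uses sets that are not disjoint, as \cref{prop:constr_stab} (\labelcref{item:inter}) requires, and that the $\ell\ge3$ hypothesis enters only through the constructibility of $X$ (\cref{prop:x}).
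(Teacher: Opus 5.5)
Your proposal is correct and follows essentially the paper's proof. You use the same case analysis of the value: the constraint $x=a \vee y=b$ excludes only the corner $(c,d)$. You also use the same constructibility argument via \cref{prop:constr_stab} and \cref{eq:constr_sup}, with $(a,b)$ witnessing that the fibre is nonempty. The only difference is an immaterial reordering: the paper fixes $(a,b)$ in $X$ first and intersects with the parameter-defined pairs of \cref{prop:pairdef}, whereas you build a parameter-free six-dimensional set from $U$ and then fix all four parameters at once. That set is most directly $\argmin$ of $f_U(a,c,x)+f_U(b,d,y)+f_X(a,b,x,y)$, which avoids the extra projection your product-then-diagonal description would need.
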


\begin{proof}
Let $f_X \in \affset{4}$ be the formula defining $X$, by constructibility. Then
\[X' = \qty{(x,y) \mid (a,b,x,y) \in X}\]
is constructible in $\mathcal{L}(a,b)$ by \cref{prop:constr_stab} (\labelcref{item:param}), since $(a,b,a,b) \in X$.

Therefore, the quantification is equivalent to one over $E = \qty(\qty{a,c} \times \qty{b,c}) \cap X'$ which is constructible over $\mathcal{L}(a,b,c,d)$ by \cref{prop:pairdef} and \cref{prop:constr_stab} (\labelcref{item:prod,item:inter}).

We must also check that this formula does indeed match the desired properties of $\theta_2$. If $(x,y) \in E$, by définition $x = a$ or $y = b$, hence $d(a,x) + d(b,y) \le 1$.

If $a = c$ and $b = d$, then $E = \qty{(a,b)}$ and \[\theta_2(a,b,c,d) = \sup_{(x,y) \in E} \qty(d(a,x) + d(b,y)) = d(a,a) + d(b,b) = 0\] Otherwise, if for example $a \ne c$ (the other case is analogous by symmetry), then $(c,b) \in E$ and the maximum of $1$ is reached, and $\theta_2(a,b,c,d) = 1$.
\end{proof}

\begin{cor}[General union]
	Let $A, B$ be constructible, of dimension $n$. Then $A \cup B$ is constructible.
\end{cor}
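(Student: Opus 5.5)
The plan is to imitate the proof of \cref{cor:union1}, which needed $U$ to select one of two elements of $M$; here we need the analogous selector on $M^n$. \cref{cor:theta2_def} together with \cref{thm:reduction_weak} gives an explicit affine formula $\theta_n$ equal to $d_n$ (for $\ell \ge 3$). So I first show that
\[U_n = \qty{(x,y,z) \in (M^n)^3 \mid z = x\ \vee\ z = y}\]
is constructible.

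For $U_n$, I take the affine formula
\[g(x,y,z) = \theta_n(x,z) + \theta_n(y,z) - \theta_n(x,y)\]
and check that $U_n = \argmin g$. Since $\theta_n$ is the $\{0,1\}$-valued trivial metric on $M^n$, the triangle inequality gives $g \ge 0$. Moreover $g = 0$ exactly when $z$ equals $x$ or $y$, by the case split below.
\begin{itemize}
\item If $x = y$: then $g = 2\theta_n(x,z)$, which vanishes iff $z = x$.
\item If $x \ne y$ and $z \in \{x,y\}$: then $g = 1 - 1 = 0$.
\item If $z \notin \{x,y\}$: then $g \ge 2 - 1 = 1$.
\end{itemize}
The minimum $0$ is attained, for instance at $z = x$. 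This is exactly the argument behind the constructibility of $U$, lifted verbatim once $\theta_n$ replaces $d$.

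Next I write
\[A \cup B = \pi_3\qty(\qty(A \times B \times M^n) \cap U_n),\]
where $\pi_3$ is the projection onto the last $n$ coordinates. By \cref{prop:constr_stab} (\labelcref{item:prod}), $A \times B \times M^n$ is constructible, using $M^n = \argmin 0$. The intersection with $U_n$ is constructible by \cref{prop:constr_stab} (\labelcref{item:inter}), since both sets have dimension $3n$ and they are not disjoint: pick $a \in A$ (nonempty, as an argmin in a classical structure) and any $b \in B$, and the triple $(a,b,a)$ lies in both. Finally the projection is constructible by \cref{prop:constr_stab} (\labelcref{item:proj}). The set equality itself is immediate: $z \in A \cup B$ iff there are $a \in A$, $b \in B$ with $z \in \{a,b\}$.

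The only real obstacle is the first step, namely having $d_n$ as an affine formula so that $U_n$ can be built. Everything before \cref{cor:theta2_def} worked only with the $\{0,1\}$-valued $d$ on single elements, which is why the earlier remark noted that the union proof would not generalize until the $\theta_n$ were available. Once $\theta_n$ is in hand, the rest is routine. The result inherits the standing hypothesis $\boxed{\ell \ge 3}$ through \cref{thm:reduction_weak}.
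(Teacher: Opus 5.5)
Your proposal is correct and follows the paper's proof: you obtain $\theta_n = d_n$ from \cref{thm:reduction_weak} and \cref{cor:theta2_def}, show $U_n = \argmin_{x,y,z}\bigl(\theta_n(x,z)+\theta_n(y,z)-\theta_n(x,y)\bigr)$, and then repeat the product--intersection--projection argument of \cref{cor:union1}. Your explicit check that $(A\times B\times M^n)\cap U_n$ is nonempty, which \cref{prop:constr_stab} (\labelcref{item:inter}) requires, is a detail the paper leaves implicit.
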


\begin{proof}
	Applying \cref{thm:reduction_weak}, we obtain an affine expression for $d_n$. In a similar manner to the 1-dimensional case, we define
	\[U_n = \qty{(a,b,c) \in M^{3n} \mid c \in \qty{a,b}} = \argmin_{a,b,c} \qty(d_n(a,c) + d_n(b,c) - d_n(a,b))\]
	and the remainder of the proof is identical to that of \cref{cor:union1}.
\end{proof}

The above proof strategy has relied on the assumption that $\ell \ge 3$, and taking $\ell = 2$ makes the proof fail at key points, in a way that does not seem easily repairable.

\begin{ques}
Is there an analogous construction to \cref{prop:x} in the case $\ell = 2$? If so, can it lead to a construction that is uniform over all $\ell \ge 2$?
\end{ques}

\subsection{Quantifier alternation analysis}

Thanks to \cref{cor:theta2_def,thm:reduction_weak}, we have obtained expressions for $\theta_n$ for all $n \ge 2$. To conclude the proof of \cref{thm:main_weak}, we must prove that they have quantifier complexity $2\lceil \log_2 n\rceil - 1$. To this end, we need to unravel the nested quantifications over constructible sets and check how the final expression behaves with regards to quantifier alternation.

Looking at \cref{eq:constr_sup} (in the proof of \cref{prop:defconstr}, page \pageref{prop:defconstr}), there are two possible sources of quantifiers (and thus extraneous quantifier alternation):
\begin{itemize}
\item The $\inf_z f(z)$ subexpression.
\item The quantified expression $\varphi$.
\end{itemize}

Notice that in this equation, the former is negated twice, meaning it creates, \emph{a priori}, an extra quantifier alternation with the outermost $\sup$, which is the quantifier used in the proof of \cref{cor:theta2_def}. Hence, it is important to check whether it can be rewritten in a way that does not create extraneous quantifier alternations.

Another potential source of quantifier alternations is \cref{prop:constr_stab} (\labelcref{item:proj}), which is often used implicitly throughout the proof, and whose defining formula makes use of $\inf$-quantification. Fortunately, this particular result can be sidestepped:

\begin{rem}
Suppose $A \subseteq M^2$ is constructible, and let $A' = \pi_1(A) = \qty{a \in M \mid \exists a', (a, a') \in A}$. Then for all formulas $\varphi$,
\[\inf_{a \in A'} \varphi(a) = \inf_{(a,a') \in A} \varphi(a)\]
This naturally generalizes to higher dimensions and more complex projections.
\end{rem}

To apply this result, we change definitions used in \cref{sec:proof} as follows:
\begin{align*}
C_{\le 2} &= \qty{(a,b,c,d) \mid c \ne d \wedge \qty|\qty{a,b,c,d}| = 2} = \argmin_{\qty{(a,b,c,d), c \ne d}} \qty|\qty{a,b,c,d}| \\
A_1 &= \qty{(a,b,c,d,e,f) \mid (a,e,c,f) \in C_{\le 2} \wedge b \in \qty{d,e}} \\
A_2 &= \qty{(a,b,c,d,e,f) \mid (b,e,d,f) \in C_{\le 2} \wedge c \in \qty{a,e}} \\
B &= \qty{(a,b,c,d,e) \in \qty{(a,b,c,d,e), a \ne e \wedge c \ne d} \mid b \in \qty{a,d} \wedge b \in \qty{c,e}} \\
A_3 &= \qty{(a,b,c,d,e,f,g) \mid (b,e,d,f,g) \in B \wedge c \in \qty{a,e}} \\
X &= \big\{(a,b,c,d,e_1,\ldots,e_7) \mid \\
	& \qquad(a,b,c,d,e_1,e_2) \in A_1 \\
	& \qquad\wedge (a,b,c,d,e_3,e_4) \in A_2 \\
	& \qquad\wedge (a,b,c,d,e_5,e_6,e_7) \in A_3\big\} \\
X' &= \qty{(x,y,e_1,\ldots,e_7) \mid (a,b,x,y,e_1,\ldots,e_7) \in X \wedge x \in \qty{a,c} \wedge y \in \qty{b,d}} \\
\theta_2(a,b,c,d) &= \sup_{(x,y,e_1,\ldots,e_7) \in X'} d(a,x) +d(b,y)
\end{align*}

In the following, we freely apply the results and definitions of \cref{sec:csets,sec:proof}.

\begin{align}
f_{X'}(x,y,e_1,\ldots,e_7) &= f_X(a,b,x,y,e_1,\ldots,e_7) \\ & +d(x,a) + d(x,c) \nonumber\\ &+ d(y,b) + d(y,d) \nonumber\\
f_X(a,b,x,y,e_1,\ldots,e_7) &= f_{A_1}(a,b,x,y,e_1,e_2) \\ &+ f_{A_2}(a,b,x,y,e_3,e_4) \nonumber \\ &+ f_{A_3}(a,b,x,y,e_5,e_6,e_7) \nonumber \\
f_{A_1}(a,b,x,y,e_1,e_2) &= f_{C\le 2}(a,x,e_1,e_2) + d(b,y) + d(b,e_1) \label{eq:a1} \\
f_{C_{\le 2}}(a,x,e_1,e_2) &= K_1 (-d(e_1,e_2) - \inf_{a',x',e_1',e_2'} -d(e_1',e_2')) \label{eq:cle2} \\ &+ \qty|\qty{a,e_1,x,e_2}| \nonumber\\
\qty|\qty{a,e_1,x,e_2}| &= \inf_z \qty(d(a,z) + d(e_1,z) + d(x,z)) \label{eq:c4} \\ &+ \sup_w \qty(d(w,a)+d(w,e_1)+d(w,x)-d(w,e_2)-2) \nonumber\\
f_{A_2}(a,b,x,y,e_3,e_4) &= f_{C\le 2}(b,y,e_3,e_4) + d(x,a) + d(x,e_3) \label{eq:a2} \\
f_{A_3}(a,b,x,y,e_5,e_6,e_7) &= f_B(b,e_5,y,e_6,e_7) + d(x,a) + d(x,e_5) \label{eq:a3}\\
f_B(b,e_5,y,e_6,e_7) &= K_2(2 -d(b,e_7)- d(y,e_6)) \label{eq:b}\\
  &+ d(e_5,b) + d(e_5,e_6)\nonumber\\
  &+ d(e_5,y) + d(e_5,e_7)\nonumber
\end{align}
and finally
\begin{equation}
\label{eq:theta2_alt}
\begin{aligned}
\theta_2(a,b,c,d) &= \sup_{(x,y,e_1,\ldots,e_7)}  \bigg[ \\
&-K_0\qty(f_{X'}(x,y,e_1,\ldots,e_7) - \inf_{x',y',e_1',\ldots,e_7'} f_{X'}(x',y',e_1',\ldots,e_7')) \\
& + d(a,x) + d(b,y) \bigg]
\end{aligned}
\end{equation}
where $K_0, K_1, K_2$ are real constants chosen as in \cref{eq:constr_quant}. Note the minus sign before $K_0$.

Noting that the $\inf$ in \cref{eq:theta2_alt} quantifies over every variable except $a,b$, we can make the following observations:
\begin{itemize}
\item The $\inf$ in \cref{eq:cle2} can be simplified to just $-1$.
\item Taking the $\inf$ over every variable except $a$ in \cref{eq:cle2} yields \[\inf_{a,b,c\ne d} \qty|\qty{a,b,c,d}| = 2\].
\item Doing the same in \cref{eq:a1,eq:a2} also yields $2$ for both, by picking $y=e_1=b$ (resp. $x=e_3=a$)
\item By the definition of $K_2$, the minimum can only be reached when $b \ne e_7$ and $y \ne e_6$ (which is possible since $\ell > 1$).

In such a case, $d(e_5,b) + d(e_5,e_7) \ge 1$ and $d(e_5,y) + d(e_5,e_6) \ge 1$, meaning the minimum is at least $2$. This is reached by picking $e_4=y=b$.
\item Similarly to above, the $\inf$ over every variable from $a,b$ in \cref{eq:a3} is $2$
\item Therefore
\[\inf_{x,y,e_1,\ldots,e_7} f_X(a,b,x,y,e_1,\ldots,e_7) = 2 + 2 + 2 = 6\]
and
\begin{align*}
\inf_{x,y,e_1,\ldots,e_7} f_{X'}(x,y,e_1,\ldots,e_7) &= 6 + \inf_x \qty(d(x,a) + d(x,c)) + \inf_y \qty(d(y,b) + d(y,d))\\
&= 6 + d(a,c) + d(b,d)
\end{align*}
\end{itemize}

Substituting this expression into \cref{eq:theta2_alt} and unfolding all definitions, we get the following result:

\begin{lem}
\label[lem]{lem:theta2_form}
The formula $\theta_2$ can be written as a $\sup$ quantifying over the sum of one $\inf$, one $\sup$ (both in \cref{eq:c4}), and many other quantifier-free terms. Hence $\theta_2$ can be written with only one quantifier alternation.
\end{lem}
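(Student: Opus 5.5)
The plan is to substitute the computed value of the inner infimum into \cref{eq:theta2_alt} and then track the sign of every quantifier that remains. By the bullet-point computation just above,
\[\inf_{x',y',e_1',\ldots,e_7'} f_{X'} = 6 + d(a,c) + d(b,d),\]
which is quantifier-free. So the only quantifiers left inside the outer $\sup_{(x,y,e_1,\ldots,e_7)}$ are those occurring in $f_{X'}(x,y,e_1,\ldots,e_7)$ itself.

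The first step is to unfold $f_{X'}$ using \cref{eq:a1,eq:cle2,eq:c4,eq:a2,eq:a3,eq:b}.
\begin{itemize}
\item In \cref{eq:cle2}, the inner $\inf_{a',x',e_1',e_2'} -d(e_1',e_2')$ is the constant $-1$, as observed above.
\item \Cref{eq:b}, \cref{eq:a1,eq:a2,eq:a3} and the pair terms in $f_{X'}$ are quantifier-free.
\item The only remaining quantifiers are those in the two occurrences of $\qty|\qty{\cdot}|$ from \cref{eq:c4}: one for $f_{C_{\le2}}(a,x,e_1,e_2)$ and one for $f_{C_{\le2}}(b,y,e_3,e_4)$.
\end{itemize}
Each occurrence contributes one $\inf_z$ term and one $\sup_w$ term, each with positive coefficient. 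The statement's ``one $\inf$, one $\sup$'' should therefore be read per cardinality term, or after merging: two $\inf$s over disjoint bound variables summed together equal a single $\inf$ over the pair, and likewise for the two $\sup$s.

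The second step is to track signs. $f_{X'}$ enters \cref{eq:theta2_alt} with coefficient $-K_0<0$. The cardinality terms enter $f_{X'}$ with positive coefficient, since $f_{C_{\le2}}$ appears with coefficient $+1$ in $f_{A_1}$ and $f_{A_2}$, which appear with $+1$ in $f_X$. Pushing $-K_0$ inside turns each $\inf_z(\cdots)$ into $\sup_z(-K_0\cdots)$ and each $\sup_w(\cdots)$ into $\inf_w(-K_0\cdots)$. The body of the outer $\sup$ is then a sum of quantifier-free terms, terms $\sup_z\psi$, and terms $\inf_w\chi$, with $\psi,\chi$ quantifier-free and their bound variables renamed apart.

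The final step is prenexing, as in the proof of \cref{prop:alg_quantalt}. Because the quantified terms are summed and use disjoint variables, the quantifiers can be pulled out in any order. I put all $\sup$ quantifiers outermost and absorb them into the outer $\sup_{(x,y,e_1,\ldots,e_7)}$, then the $\inf_w$s. This gives a prenex form $\sup\cdots\sup\inf\cdots\inf$ with one alternation.

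The main obstacle is the bookkeeping in the first step: checking that no subformula hides a further nested quantifier or a negative coefficient that would flip a sign a second time. The places to verify are the $K_1$ term in \cref{eq:cle2} and the constant infimum of $f_{X'}$. Both have already been reduced to quantifier-free expressions, so I expect the count to come out as claimed.
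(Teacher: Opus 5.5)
Your proposal is correct and follows the paper's argument. You replace the inner infimum in \cref{eq:theta2_alt} by the quantifier-free value $6 + d(a,c) + d(b,d)$, unfold the definitions so that the only surviving quantifiers come from \cref{eq:c4}, push $-K_0$ through them, and prenex. Your extra bookkeeping (the two copies of \cref{eq:c4} coming from $f_{C_{\le2}}(a,x,e_1,e_2)$ and $f_{C_{\le2}}(b,y,e_3,e_4)$, merged using disjoint bound variables, and the sign flip that lets the resulting $\sup$ be absorbed into the outer one) just spells out what the paper compresses into ``unfolding all definitions.''
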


In fact, the special form of the formula being quantified over allows us to state a strong result:

\begin{prop}
\label{prop:elem_quantalt}
The formulas $\theta_n$ obtained from the above definition of $\theta_2$ and the procedure from \cref{thm:reduction} can be written with $2\lceil \log_2 n \rceil - 1$ quantifier alternations.
\end{prop}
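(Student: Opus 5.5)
We argue by induction on $k$, for $n = 2^k$. Padding then handles general $n$ exactly as in the proof of \cref{thm:reduction}: $\theta_n$ is $\theta_{2^{\lceil \log_2 n\rceil}}$ with some variables repeated, which does not change the prenex quantifier prefix. The claim to establish is that $\theta_{2^k}$ can be written in prenex form with $2k-1$ alternations, with outermost quantifier $\sup$ and quantifier-free matrix.

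For $k=1$ this is \cref{lem:theta2_form}. Written out, $\theta_2 = \sup_{\bar u} \big[ \inf_z \psi_1 + \sup_w \psi_2 + \chi \big]$, where $\psi_1,\psi_2,\chi$ are quantifier-free affine combinations of $d$ and the $\sup_w$ term enters with positive sign. In fact $\theta_2$ is a $\sup$ over a sum containing only a single-block $\inf$ and a single-block $\sup$. Pulling $\sup_w$ out first and then $\inf_z$ gives the prefix $\sup\,\inf$, i.e. one alternation ($=2\cdot1-1$), starting with $\sup$. I will record this shape precisely, including the sign with which each $d$-occurrence appears in the matrix, since signs decide which quantifiers flip.

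For the inductive step, $\theta_{2^{k+1}}$ is obtained from $\theta_2$ by replacing each variable by a $2^k$-tuple and each occurrence of $d$ by $\theta_{2^k}$. By induction each $\theta_{2^k}$ is $\sup\,\inf\,\sup\cdots$ with $2k-1$ alternations, i.e. $2k$ blocks. An occurrence with positive coefficient keeps its prefix starting with $\sup$; one with negative coefficient, after pushing the minus sign inside, becomes $\inf\,\sup\cdots$ with the same number of alternations. Since all these occur inside sums, their quantifier blocks can be interleaved freely, as in the proof of \cref{prop:alg_quantalt}, provided bound variables are renamed apart. The key step is then the following bookkeeping.
\begin{itemize}
\item Terms sitting directly under the outer $\sup_{\bar u}$ (the matrix $\chi$) with positive sign merge their leading $\sup$ into $\sup_{\bar u}$, so they cost $2k-1$ further alternations after it.
\item Terms with negative sign start with $\inf$ and so cost one more.
\item Terms inside $\sup_w$ behave likewise, one level deeper.
\item Terms inside $\inf_z$ add further alternations on top of the $\inf_z$ block.
\end{itemize}
Interleaving all blocks of the same depth and type, the total number of alternations is the maximum over occurrences of (the depth at which the occurrence sits) plus (its own alternation count, plus one if its sign mismatches the surrounding quantifier). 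The target is $2k+1 = 2(k+1)-1$, so each occurrence may contribute at most two extra levels beyond the $2k-1$ of $\theta_{2^k}$.

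The main obstacle is checking this maximum against the explicit formula: \cref{eq:c4,eq:cle2,eq:a1,eq:a2,eq:a3,eq:b,eq:theta2_alt} must be inspected to confirm that every occurrence of $d$ inside $\inf_z$ or $\sup_w$ has a sign compatible with its enclosing quantifier, so that its leading block merges. In $\inf_z(d(a,z)+\dots)$ the $d$'s appear with coefficient $+1$ under $-K_0K_1$ or similar, and I will carry the global sign through, since a $\sup$ of a $-(\inf\ldots)$ becomes $\sup$ of $\sup\ldots$. The occurrences in $\chi$ with the wrong sign cost exactly one extra alternation, which is also fine. If a bad case turns up, I will try reordering, placing the $\sup_w$ and $\inf_z$ blocks inside or outside so that they align with the leading blocks of the substituted $\theta_{2^k}$'s. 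This degree of freedom is why the bound comes out as $2\lceil\log_2 n\rceil-1$ rather than better: each doubling gains exactly two alternations.
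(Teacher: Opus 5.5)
Your argument is correct, but it runs the induction in the opposite direction from the paper.

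\textbf{Comparison with the paper.} You use $\theta_{2^{k+1}}=\theta_2[d:=\theta_{2^k}]$, which is literally the recursion in the proof of \cref{thm:reduction}. You then fit the $2k$-block prefixes of the substituted $\pm\theta_{2^k}$ into the $\sup\inf$ skeleton of $\theta_2$. The paper instead uses $\theta_{2^{k+1}}=\theta_{2^k}[d:=\theta_2]$, which is the same formula by associativity of substitution. There all new quantifiers appear inside the matrix of the prenex form of $\theta_{2^k}$, as a sum of terms of shapes $\sup(\sup+\inf)$ and $\inf(\inf+\sup)$. That sum prenexes to three alternating blocks, and the first is merged with the innermost block of $\theta_{2^k}$, giving $+2$. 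Both routes are instances of $\mathrm{alt}(\varphi[d:=\psi])\le\mathrm{alt}(\varphi)+\mathrm{alt}(\psi)+1$. The paper's order needs only the alternation count of $\theta_{2^k}$. Yours needs only that $\theta_2$ has prefix $\sup\inf$; tracking that $\theta_{2^k}$ starts with $\sup$ is harmless but not needed for the count.

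\textbf{Your ``main obstacle'' is not one.} The check you propose would in fact fail, and the argument must not hinge on it. By your own bookkeeping, every occurrence of $d$ sits at alternation depth at most $1$ in $\theta_2$, and a sign mismatch costs at most one more alternation. So each occurrence contributes at most $(2k-1)+2=2k+1$, which is exactly the target, whatever the signs.

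Mismatches genuinely occur. In \cref{eq:theta2_alt} the terms of \cref{eq:c4} are multiplied by $-K_0$, so the $\sup_w$ term does not enter with positive sign. After pushing the sign inside, $\sup_w$ becomes $\inf_w$ and $\inf_z$ becomes $\sup_z$. That $\inf_w$ contains $+K_0\,d(w,e_2)$, whose substituted $\theta_{2^k}$ begins with a $\sup$. This occurrence is where $2k+2$ blocks are reached.

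So drop the sign inspection and the ``reordering'' fallback, and conclude directly from the depth-plus-mismatch bound. It is this mismatch at depth $1$, not any freedom in reordering, that makes each doubling cost two alternations in this analysis.
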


\begin{proof}
Analogous to the proof of \cref{prop:alg_quantalt}. The key difference lies in the analysis of the expression after substituting all instances of the metric with $\theta_2$.

Specifically, assume $k \ge 1$, and the expression of $\theta_{2^k}$ has been put in prenex form. Replacing variables with pairs of variables, instances of $d$ with $\theta_2$, and pushing minus signs inside quantifiers, the formerly quantifier-free inner formula becomes a sum of terms of three different forms:
\begin{itemize}
\item Quantifier-free terms
\item Terms of the form $\sup \qty(\sup \qty(\cdots) + \inf \qty (\cdots) + \cdots)$, corresponding to former instances of $d$ preceded by a plus sign.
\item Terms of the form $\inf \qty(\inf \qty(\cdots) + \sup \qty (\cdots) + \cdots)$, corresponding to former instances of $d$ preceded by a minus sign.
\end{itemize}
The latter two can be rewritten as formulas with one quantifier alternation. Since these quantified expressions are summed, we can interleave all of them while preserving the validity of the formula, as long as their relative orders do not change. It isn't too hard to see that any way of writing this prenex form will have at least two quantifier alternations. Doing this in such a way that the outermost quantifier of the sum matches the innermost quantifier of the initial prenex form, we add only 2 quantifier alternations to the amount we started with. Since $\theta_2$ can be written with just one quantifier alternation, this proves the result by induction.
\end{proof}

\appendix

\crefalias{section}{appendix}

\section{Code listing for \cref{sec:alg}}
\label{app:code}

\begin{lstlisting}[language=Python,escapeinside={\%}{\%}]
import numpy as np
from typing import Callable
import itertools as it

# Basic definition of affine formulas
class Var:
    counter = 0

    def __init__(self):
        self.key = Var.counter
        Var.counter += 1

    def __hash__(self):
        return self.key

class Valuation:
    def __init__(self, free_var_dict: dict[Var, int]):
        self.dict = free_var_dict
        self.max_val = max(free_var_dict.values())

    def __getitem__(self, var: Var):
        return self.dict[var]

    def __setitem__(self, var, val):
        if val > self.max_val + 1:
            raise AssertionError
        elif val == self.max_val + 1:
            self.max_val = val
        self.dict[var] = val
    def pop(self, var):
        self.dict.pop(var)
        self.max_val = max(self.dict.values())

class Formula:
    def __init__(self, vars_to_bind: set[Var]):
        self.to_bind = vars_to_bind

    def eval(self, valuation: Valuation) -> float:
        raise NotImplementedError

class Distance(Formula):
    def __init__(self, first: Var, second: Var):
        super().__init__({first, second})
        self.first = first
        self.second = second

    def eval(self, valuation) -> float:
        return 0 if valuation[self.first] == valuation[self.second] else 1

class AffineComb(Formula):
    def __init__(self, const: float, *comb: tuple[float, Formula]):
        super().__init__(set().union(*(x[1].to_bind for x in comb)))
        self.const = const
        self.comb = comb

    def eval(self, valuation) -> float:
        r = self.const
        for coeff, f in self.comb:
            r += coeff * f.eval(valuation)
        return r

def r():
    x = np.random.randint(-2, 3)
    print(x)
    return x

class Quantifier(Formula):
    def __init__(self, func: Callable[[float, float], float], form_constr: Callable[[Var], Formula]):
        fresh_var = Var()
        form = form_constr(fresh_var)
        bound = set(form.to_bind)
        bound.remove(fresh_var) # throws KeyError if not inside
        super().__init__(bound)
        self.func = func
        self.var = fresh_var
        self.formula = form

    def eval(self, valuation):
        valuation[self.var] = 0
        res = self.formula.eval(valuation)
        for vl in range(1, min(L, (valuation.max_val + 1) + 1)):
            valuation[self.var] = vl
            res = self.func(res, self.formula.eval(valuation))
        valuation.pop(self.var)
        return res

class InfQuantifier(Quantifier):
    def __init__(self, form_constr: Callable[[Var], Formula]):
        super().__init__(min, form_constr)

class SupQuantifier(Quantifier):
    def __init__(self, form_constr: Callable[[Var], Formula]):
        super().__init__(max, form_constr)

# Primitive for d_2
class D2(Formula):
    def __init__(self, *vars: Var):
        super().__init__(set(vars))
        self.vars = vars

    def eval(self, val: Valuation) -> float:
        (a, b, c, d) = self.vars
        return 0 if (val[a], val[b]) == (val[c], val[d]) else 1

# Generate all types of 'n'-tuples in a space of cardinality 'max_card'
def generate_types(n, max_card = None):
    if max_card is None:
        max_card = n + 1
    ts = [[0]]
    for _ in range(1, n):
        new_ts = []
        for t in ts:
            new_ts.extend(t + [i] for i in range(min(max(t) + 2, max_card)))
        ts = new_ts
    return ts

if __name__ == '__main__':
    n = 4
    # %$\ell$%, can be changed to 2 or 3
    L = 4
    free = [Var() for _ in range(n)]
    # Generate all types of tuples
    types = generate_types(n, max_card=L)
    # The %$\varphi_i$%, i from 1 through 15
    formulas: list[Formula] = [
        AffineComb(1.0),
        *[Distance(free[i], free[j]) for i in range(3) for j in range(i + 1, 4)],
        *[InfQuantifier(
            lambda a: AffineComb(
                1.0,
                (1, Distance(a, free[c[0]])),
                (1, Distance(a, free[c[1]])),
                (1, Distance(a, free[c[2]]))
            )
        ) for c in it.combinations(range(4), 3)],
        SupQuantifier(
            lambda a: AffineComb(
                4.0,
                (-1, Distance(a, free[0])),
                (-1, Distance(a, free[1])),
                (-1, Distance(a, free[2])),
                (-1, Distance(a, free[3])),
            )
        ),
        SupQuantifier(
            lambda a: AffineComb(
                -2.0,
                (1, Distance(a, free[0])),
                (1, Distance(a, free[1])),
                (1, Distance(a, free[2])),
                (-1, Distance(a, free[3])),
            )
        ),
        SupQuantifier(
            lambda a: AffineComb(
                0,
                (1, Distance(a, free[0])),
                (1, Distance(a, free[1])),
                (-1, Distance(a, free[2])),
                (-1, Distance(a, free[3]))
            )
        ),
        SupQuantifier(
            lambda a: AffineComb(
                0,
                (2, Distance(a, free[0])), # 2
                (1, Distance(a, free[1])),
                (-2, Distance(a, free[2])),
                (-1, Distance(a, free[3])) # -2
            )
        ),
        AffineComb(
            0.0,
            (1, Distance(free[0], free[1])),
            (-1, Distance(free[0], free[2])),
            (-1, Distance(free[0], free[3])),
            (-1, InfQuantifier(
                lambda a: AffineComb(
                    1.0,
                    (1, Distance(a, free[0])),
                    (1, Distance(a, free[1])),
                    (1, Distance(a, free[2]))
                )
            )),
            (1, InfQuantifier(
                lambda a: AffineComb(
                    1.0,
                    (1, Distance(a, free[0])),
                    (1, Distance(a, free[2])),
                    (1, Distance(a, free[3]))
                )
            )),
            (1, SupQuantifier(
                lambda a: AffineComb(
                    0,
                    (2, Distance(a, free[0])),
                    (-2, Distance(a, free[2])),
                    (1, Distance(a, free[1])),
                    (-1, Distance(a, free[3]))
                )
            ))
        )
    ]
    # Compute vector for each formula, and store as matrix
    A = np.ndarray((len(formulas), len(types)))
    for i, f in enumerate(formulas):
        for j, t in enumerate(types):
            val = Valuation({free[i]: t[i] for i in range(n)})
            A[i, j] = f.eval(val)
    # Check that the set is generating
    assert np.linalg.matrix_rank(A) >= len(types)

    theta_2 = SupQuantifier(
        lambda a: SupQuantifier(
            lambda b: InfQuantifier(
                lambda c : AffineComb(
                    0,
                    (1, Distance(free[0], free[1])),
                    (-1, Distance(free[0], free[2])),
                    (-1, Distance(free[0], free[3])),
                    (2, Distance(a, free[0])),
                    (-2, Distance(a, free[2])),
                    (1, Distance(a, free[1])),
                    (-1, Distance(a, free[3])),
                    (-1, Distance(b, free[0])),
                    (-1, Distance(b, free[1])),
                    (-1, Distance(b, free[2])),
                    (1, Distance(c, free[0])),
                    (1, Distance(c, free[2])),
                    (1, Distance(c, free[3]))
                )
            )
        )
    )

    # Check theta_2 is equal to d_2
    for t in types:
        d_2 = 0 if (t[0], t[1]) == (t[2], t[3]) else 1
        assert d_2 == theta_2.eval(Valuation({free[i]: t[i] for i in range(n)}))
\end{lstlisting}

\printbibliography
\end{document}